\theoremstyle{plain}
\newtheorem{theorem}{Theorem}[section]
\newtheorem{lemma}[theorem]{Lemma}
\newtheorem{proposition}[theorem]{Proposition}
\theoremstyle{remark}
\def\QQ{\mathbb{Q}}
\begin{document}

\selectlanguage{english}
\title[On Hilbert genus fields ...]{On Hilbert genus fields of   imaginary cyclic quartic  fields}

  \author[M. A.  Hajjami]{Moulay Ahmed Hajjami} 
  \address{Moulay Ahmed Hajjami, Department of  Mathematics, Faculty of  Sciences and Technology, Moulay Ismail University of Meknes, Errachidia, Morocco.}
  \email{a.hajjami76@gmail.com}

  \author[M. M. Chems-Eddin]{Mohamed Mahmoud Chems-Eddin}
  \address{Mohamed Mahmoud Chems-Eddin: Mohammed First University, Mathematics Department, Sciences Faculty, Oujda, Morocco }
  \email{2m.chemseddin@gmail.com}

\keywords{Imaginary cyclic quartic  fields, unramified extensions,  Hilbert genus fields.}
\subjclass[2010]{11R16, 11R29, 11R27, 11R04, 11R37 }

\begin{abstract}
 Let $p$ be a prime number such that $p=2$ or $p\equiv 1\pmod 4$. Let $\varepsilon_p$ denote the  fundamental unit of  $\mathbb{Q}(\sqrt{p})$ and let $a$ be a positive square-free integer. The main aim of this paper is to determine explicitly the Hilbert genus field of  the imaginary cyclic quartic    fields of the form $\mathbb{Q}(\sqrt{-a\varepsilon_p\sqrt{p}})$.
\end{abstract}

\selectlanguage{english}

\maketitle
  \section{\bf Introduction}\label{sec:1}
 The study of the unramified extensions of a given number field $k$ is of a huge interest in algebraic number theory. For instance, the importance of the Hilbert class field of $k$, denoted by $H(k)$,  is represented in the fact that it is
 the maximal abelian unramified extension of $k$ and its Galois group    over  $k$, i.e., $\mathrm{G}:=\mathrm{Gal}(H(k)/k)$,  is isomorphic to $\mathbf{C}l(k)$, the class group of $k$ (cf. \cite[p. 228]{ref12}). Another important example of these unramified extensions  is  the genus field of $k$, which is defined as  the maximal extension of $k$ which is unramified 
 at all finite and infinite primes of $k$ of the form $kk_1$, where $k_1$ is an abelian extension of $\mathbb{Q}$ (cf. \cite{ref11}). These two fields have been largely investigated in old and recent studies (e.g. \cite{ref1,taous2008,chemszekhniniaziziUnits1,chemsazizizekhninijerr,cohn,kisilvsky}).
 
 Another very interesting  example  of  unramified extensions of $k$ is  the Hilbert genus field of $k$ which  is the invariant field $E(k)$ of $\mathrm{G}^2$. Then, by Galois theory, we  have:
 $$\mathbf{C}l(k)/\mathbf{C}l(k)^2  \simeq \mathrm{G}/\mathrm{G}^2 \simeq \mathrm{Gal}(E(k)/k),$$
 and thus, $2$-rank $(\mathbf{C}l(k))$ = $2$-rank $(\mathrm{Gal}(E(k)/k))$. On the other hand, $E(k)/k$ is the maximal unramified Kummer extension of exponent $2$. Thus, by Kummer theory (cf. \cite[p. 14]{ref16}), there exists a unique multiplicative group $\Delta$ such that
 $$
 E(k)=H(k)\cap k(\sqrt{k^*})= k(\sqrt{\Delta})  \text{  and }  {k^*}^2 \subset {\Delta} \subset k^*.
 $$
 Therefore, the question that arises is how to construct  the Hilbert genus field of $k$, or equivalently, how to give a set of generators for the finite group $\Delta/k{^*}^2$.
 Note that many    mathematicians have  investigated this question for some   biquadratic number fields. For example,   Bae and Yue   studied the Hilbert genus field of the fields $\mathbb{Q}(\sqrt{p}, \sqrt{d})$, for 
 a prime number $p$ such that, $p=2$ or $p\equiv 1\pmod 4$, and    a positive square-free  integer $d$ (cf. \cite{ref2}).

 Recently, Ouang and Zhang have determined the Hilbert genus field of the imaginary biquadratic fields $ \mathbb{Q}(\sqrt{\delta}, \sqrt{d})$, where $\delta =-1, -2$  or $-p$ with $p\equiv 3\pmod 4$ a prime number and $d$ any square-free integer. Thereafter,  they   constructed   the Hilbert genus field of real biquadratic fields $\mathbb{Q}(\sqrt{\delta}, \sqrt{d})$, 
 for   any  positive square-free integer $d$, and   $\delta=p, 2p$ or $p_1p_2$ where $p$, $p_1$ and $p_2$ are prime numbers congruent to $3\pmod 4$, such that the class number of $ \mathbb{Q}(\sqrt{\delta})$ is odd (cf.  \cite{ref17,ref18}). For more works on this problem, we refer the reader to the  papers \cite{ref20,ref19,ref7}.

 In the present work, using  other easier   techniques based on genus fields, we shall construct the Hilbert genus fields of  imaginary cyclic quartic fields of the form  $K=\mathbb{Q}(\sqrt{-a{\varepsilon}_p \sqrt{p}})$, for a prime number $p$ such that $p=2$ or $p\equiv 1\pmod 4$, and a positive square-free integer $a$ relatively prime to $p$.
 
 The plan of this paper is the following. In Section \ref{sec:2}, we shall collect some results of cyclic quartic fields theory and genus theory. Section \ref{sec:3} is 
 dedicated to the investigation of the genus field of the imaginary cyclic quartic fields $K$. In Section \ref{sec:4}, we will construct the Hilbert genus fields of the fields $K$. Therein,  we   give some numerical examples.

 \begin{center}
 	{	\large \bf{Notations}}
 \end{center}
 Let $k$ be a number field. The next notations will be used for the rest of
 this article:
 \begin{enumerate}[$\bullet$]
 	\item $\mathcal{O}_k$:  the ring of integers of $k$,
 	\item $\mathbf{C}l(k)$: the class group of $k$,
 	\item $h(k)$: the class number of $k$, 
 	\item $N_{k/k'}$:  the norm map of an extension $k/k'$,
 	\item $E_k$: the unit group of $k$,
 	\item $k^*$: the nonzero elements of  $k$,
 	\item $k^{(*)}$: the   genus field of  $k$,
 	\item $E(k) $:  the Hilbert genus field of $k$,
 	\item $H(k) $:  the Hilbert  class field of $k$,
 	\item $\delta_k$: the absolute discriminant of $k$,
 	\item $\delta_{k/k'}$: the generator of the relative discriminant of an extension $k/k'$,
 	\item $r_{2}(A)$:  the $2$-rank of a finite  abelian group $A$,
 	\item  $\varepsilon_{d} $:  the fundamental unit of $\QQ(\sqrt{d})$, where $d$ is a positive square-free integer,
 	\item $p$: a prime number such that $p=2$ or $p\equiv 1\pmod 4$,
 	\item $a$: a positive square-free  integer relatively prime to $p$,
 	\item $\delta=-a\varepsilon_p \sqrt{p}$,
 	\item $k_0=\mathbb{Q}(\sqrt{p})$,
 	\item $K=k_0(\sqrt \delta)$: an imaginary quartic cyclic number field,
 	\item $q_j$: an odd prime integer,
 	\item $\left(\dfrac{\cdot}{\cdot}\right)$:  the Legendre symbol,	
 	\item $\left(\frac{ -, -}{\mathcal{P}_{i}}\right)$:  the Hilbert symbol over $k_0$.           
 \end{enumerate}
 For more notations see   the beginning of each section below.  
 \section{\bf Preliminary results}\label{sec:2}
 
 In this section, we start by recalling some results that we will need in what follows. 
 Let $L$ be a cyclic quartic extension of the rational number field $\mathbb{Q}$. It is known   that  $L$ can be expressed uniquely  in the form: 
 $$L=\mathbb{Q}(\sqrt{a(d+b\sqrt{d})}),$$ 
 for some integers $a$, $b$, $c$ and $d$ such that  $d=b^2+c^2$ is square-free with $b>0$ and $c>0$, and
 $a$ is an odd square-free integer relatively prime to $d$ (cf. \cite{ref6,ref21}).  Note that $L$ possesses a unique quadratic subfield $k=\mathbb{Q}(\sqrt{d})$.

 \begin{lemma}[\cite{ref9}]\label{lemma 2.1; discriminants}
 	Keep the above notations. We have:
 	\begin{enumerate}[\rm 1.]
 		\item  	The absolute discriminant of $L$ is given by $\delta_L$, where:
 		$$
 		\delta_L=\left\{\begin{array}{lll}
 			2^8a^2d^3, & \text { if } & d \equiv 0\pmod 2, \\
 			2^4a^2d^3, & \text { if } & d \equiv 1\pmod 4, b \equiv 0\pmod 2, a+b \equiv 3\pmod 4, \\ 
 			a^2d^3,& \text { if } & d \equiv 1\pmod 4, b \equiv 0\pmod 2, a+b \equiv 1\pmod 4, \\ 
 			2^6a^2d^3, & \text { if } & d \equiv 1\pmod 4, b \equiv 1\pmod 2.\end{array}\right.
 		$$ 
 		\item The relative discriminant of  $L/k$ is given by  $\Delta_{L/k}=\delta_{L/k}\mathcal{O}_k,$ where:
 		$$
 		\delta_{L/k}=\left\{\begin{array}{lll}
 			4a \sqrt{d}, & \text { if } & d \equiv 0\pmod 2,  \\
 			4a \sqrt{d}, & \text { if } & d \equiv 1\pmod 4, b \equiv 0\pmod 2, a+b \equiv 3\pmod 4, \\ 
 			a\sqrt{d},& \text { if } & d \equiv 1\pmod 4, b \equiv 0\pmod 2, a+b \equiv 1\pmod 4, \\ 
 			8a\sqrt{d}, & \text { if } & d \equiv 1\pmod 4, b \equiv 1\pmod 2.\end{array}\right.
 		$$ 
 	\end{enumerate}
 \end{lemma}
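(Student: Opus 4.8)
The plan is to prove part (2) directly and then read off part (1). For the reduction, apply the tower (conductor--discriminant) formula to $\mathbb{Q}\subset k\subset L$ with $[L:k]=2$: as ideals of $\mathbb{Z}$ one has $(\delta_L)=(\delta_k)^2\,\big(N_{k/\mathbb{Q}}(\delta_{L/k})\big)$. Since $\delta_k=d$ when $d\equiv1\pmod4$ and $\delta_k=4d$ when $d\equiv0\pmod2$, and since $N_{k/\mathbb{Q}}(x\sqrt d)=-x^2d$ for $x\in\mathbb{Q}$, substituting the four values $\delta_{L/k}=a\sqrt d,\ 4a\sqrt d,\ a\sqrt d,\ 8a\sqrt d$ of part (2) gives, up to sign, $\delta_L=a^2d^3,\ 2^4a^2d^3,\ a^2d^3,\ 2^6a^2d^3$ in the three cases with $d\equiv1\pmod4$, and $\delta_L=(4d)^2\cdot 16a^2d=2^8a^2d^3$ when $d\equiv0\pmod2$; these are exactly the formulas of part (1). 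So the whole content lies in computing the relative discriminant $\delta_{L/k}$.

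Write $L=k(\sqrt{\alpha})$ with $\alpha=a(d+b\sqrt d)=a\sqrt d\,(b+\sqrt d)\in\mathcal{O}_k$, so that $(\alpha)=(a)\,(\sqrt d)\,(b+\sqrt d)$, and handle the odd primes first. For an odd prime $\mathfrak p$ of $k$, $\mathfrak p$ ramifies in $L/k$ iff $v_{\mathfrak p}(\alpha)$ is odd, and then $v_{\mathfrak p}(\delta_{L/k})=1$. Now $N_{k/\mathbb{Q}}(b+\sqrt d)=b^2-d=-c^2$ and $(b+\sqrt d)(b-\sqrt d)=(c)^2$; since $d=b^2+c^2$ is square-free one has $\gcd(b,d)=\gcd(c,d)=1$, so the ideals $(b\pm\sqrt d)$ are coprime away from $2$, whence $(b+\sqrt d)$ is a square of an ideal at every odd prime and contributes nothing. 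As $a$ is odd, square-free and coprime to $d$, and $(\sqrt d)^2=(d)$, the prime-to-$2$ part of $(\alpha)$, hence of $\delta_{L/k}$, equals the prime-to-$2$ part of $(a\sqrt d)$ in all four cases.

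It remains to pin down the exact power of $2$ in $\delta_{L/k}$, which is the technical heart. Localise at each prime $\mathfrak p\mid2$ of $k$ and use the standard computation of the different (equivalently the conductor) of a quadratic extension of a $2$-adic field: whether $k_{\mathfrak p}(\sqrt\alpha)/k_{\mathfrak p}$ is unramified, and if not what its discriminant exponent is ($2$ or $3$ when $2$ is unramified in $k$, and larger when $2$ ramifies in $k$), is read off from $v_{\mathfrak p}(\alpha)\bmod2$ together with the square class of the unit part of $\alpha$, which is detected modulo $\mathfrak p^{\,2v_{\mathfrak p}(2)+1}$. Split into the cases of the statement: if $d\equiv0\pmod2$ then $2$ ramifies in $k$, $2\mathcal{O}_k=\mathfrak p^2$ and $v_{\mathfrak p}(\sqrt d)=1$; if $d\equiv1\pmod4$ then $2$ is unramified in $k$ and one separates $b$ even from $b$ odd, with the further distinction $a+b\equiv1$ versus $a+b\equiv3\pmod4$ in the first subcase. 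In each case, using $d=b^2+c^2$ and the parities of $b,c$ it forces, one reduces $\alpha=a(d+b\sqrt d)$ modulo the relevant power of $2$ and reads off the local contribution, which comes out as $1$, $4$, $4$, $8$ respectively; multiplying by the prime-to-$2$ part $(a\sqrt d)$ found above yields the four asserted values of $\delta_{L/k}$, and then part (1) follows as explained. The main obstacle is precisely this $2$-adic bookkeeping: the ramified case $d\equiv0\pmod2$, where $\sqrt d$ itself enters the $\mathfrak p$-valuation, and the careful separation of the subcases $a+b\equiv1,3\pmod4$, which is exactly what distinguishes the unramified local behaviour from the ramified one.
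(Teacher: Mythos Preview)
The paper does not prove this lemma at all: it is stated with the citation \cite{ref9} (Huard--Spearman--Williams) and used as a black box, so there is no ``paper's own proof'' to compare your attempt against.

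That said, your outline is the right strategy and would work if completed. A few remarks. First, a slip in the reduction paragraph: you list ``the four values $\delta_{L/k}=a\sqrt d,\ 4a\sqrt d,\ a\sqrt d,\ 8a\sqrt d$'', but in the order of the statement the first entry should be $4a\sqrt d$ (the $d$ even case), not $a\sqrt d$; likewise your list of contributions ``$1,4,4,8$'' near the end is not aligned with the case ordering you set up just before. These are cosmetic but would confuse a reader. Second, the odd-prime argument is clean and correct: the key point that $(b+\sqrt d)(b-\sqrt d)=-c^2$ with $(b+\sqrt d)$ and $(b-\sqrt d)$ coprime at odd primes (via $\gcd(b,d)=1$) does exactly what you say. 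Third, the $2$-adic part is where the actual work lies, and you have only indicated the method (square classes in $k_{\mathfrak p}^\times$ modulo $\mathfrak p^{2v_{\mathfrak p}(2)+1}$) without doing the computations. If you want a self-contained proof rather than a sketch, you would need to write out, in each of the four cases, the explicit congruence of $\alpha=a(d+b\sqrt d)$ modulo the appropriate power of $2$ and identify the resulting local extension; this is routine but several lines per case, and is precisely what the cited reference does (via an integral basis computation rather than purely local methods).
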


 \begin{lemma}[\cite{ref9}]
 	Keep the above notations. If the class number of $k=\mathbb{Q}(\sqrt{d})$ is odd, then $L=\mathbb{Q}(\sqrt{a' \varepsilon_d\sqrt{d}}),$ where 
 	$$
 	a'=
 	\begin{cases} 2a, &\text{ if } d\equiv 1\pmod 4 \text{ and } b\equiv 1\pmod 2,\\
 		a, &\text{otherwise}.\end{cases}
 	$$
 \end{lemma}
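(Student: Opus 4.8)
The plan is to start from the uniqueness normal form $L=\mathbb{Q}(\sqrt{a(d+b\sqrt{d})})$ and massage the generator inside the radical into the shape $a'\varepsilon_d\sqrt{d}$ up to squares in $k=\mathbb{Q}(\sqrt{d})$, using that $L=k(\sqrt{\alpha})=k(\sqrt{\beta})$ iff $\alpha/\beta\in {k^*}^2$. First I would observe that $a(d+b\sqrt d)=a\sqrt d\cdot\sqrt d(d+b\sqrt d)/\sqrt d=a\sqrt d\,(b+\sqrt d)$ after pulling a factor $\sqrt d$ inside (more precisely, $d+b\sqrt d=\sqrt d(\sqrt d+b)$), so modulo the square $(\sqrt d)^{-2}=1/d$ we may replace the generator by $a\sqrt d\,(b+\sqrt d)$; hence $L=k\bigl(\sqrt{a(b+\sqrt d)\sqrt d}\bigr)$. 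The element $b+\sqrt d$ has norm $b^2-d=-c^2$ down to $\mathbb{Q}$, so it is, up to the square $c^2$ and up to sign, a unit: $N_{k/\mathbb{Q}}\bigl((b+\sqrt d)/c\bigr)=-1$, which forces $(b+\sqrt d)/c$ to be a unit of $\mathcal{O}_k$ of norm $-1$.

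The key step is then to identify this unit with a power of $\varepsilon_d$ times a rational square. Since $h(k)$ is odd, the class group has no $2$-torsion, and a standard descent argument (the ambiguous-class / Hasse norm-residue count, or simply: a unit of norm $-1$ exists so $\varepsilon_d$ has norm $-1$, and any norm $-1$ unit differs from $\varepsilon_d$ by a norm $+1$ unit, i.e. by $\pm\varepsilon_d^{2m}$) shows that $(b+\sqrt d)/c=\pm\varepsilon_d^{\,2m+1}$ for some integer $m$; actually I would be slightly more careful and argue via Kummer theory on $k(\sqrt{\cdot})/k$ that the class of $b+\sqrt d$ in $k^*/{k^*}^2 E_k$ is trivial, which gives $b+\sqrt d=\eta\cdot r^2$ with $\eta\in E_k$ and $r\in\mathbb{Q}^*$, and then $\eta$ has norm $-1$ so $\eta\in\{\pm\varepsilon_d,\pm\varepsilon_d^{-1}\}$ up to even powers. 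In all cases $a(b+\sqrt d)\sqrt d$ and $a\varepsilon_d\sqrt d$ (or $-a\varepsilon_d\sqrt d=a\varepsilon_d^{-1}\cdot N(\varepsilon_d)\sqrt d$, absorbing the sign since $N(\varepsilon_d)=-1$) differ by a square in $k^*$, so $L=\mathbb{Q}(\sqrt{a\varepsilon_d\sqrt d})$.

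The remaining point is the factor $a'\in\{a,2a\}$, which comes purely from the behaviour at $2$. When $d\equiv 1\pmod 4$ and $b$ is odd, $\sqrt d$ and $\varepsilon_d$ interact with the prime above $2$ in $k$ differently than when $b$ is even; concretely $\mathcal{O}_k=\mathbb{Z}[\tfrac{1+\sqrt d}{2}]$, and the element $b+\sqrt d$ with $b$ odd is $\equiv 2\cdot\tfrac{1+\sqrt d}{2}-1\pmod{\text{units}}$, so pulling it into the radical changes the generator by a factor $2$ modulo squares, forcing $a'=2a$; in the other cases $b+\sqrt d$ is (up to an odd rational square and a unit) already a square unit times $\sqrt d$, so $a'=a$. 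I expect this $2$-adic bookkeeping to be the main obstacle: one has to track exactly when a factor of $2$ can or cannot be removed from inside $\sqrt{\phantom{x}}$ modulo ${k^*}^2$, and this is precisely where the congruence conditions $d\equiv 1\pmod 4$, $b\equiv 1\pmod 2$ enter. The cleanest way to pin it down is to compare relative discriminants: by Lemma~\ref{lemma 2.1; discriminants} the extension $L/k$ has $\delta_{L/k}=8a\sqrt d$ exactly in the odd-$b$ case and $a\sqrt d$ or $4a\sqrt d$ otherwise, and matching this against the discriminant of $k(\sqrt{a'\varepsilon_d\sqrt d})/k$ determines whether the correct normalization is $a$ or $2a$.
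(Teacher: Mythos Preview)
The paper does not prove this lemma at all; it is quoted verbatim from \cite{ref9} (Huard--Spearman--Williams) with no argument, so there is nothing on the paper's side to compare your sketch against. I will therefore comment only on the soundness of your plan.

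Your overall strategy---rewrite $a(d+b\sqrt d)=a\sqrt d\,(b+\sqrt d)$ and then analyse $b+\sqrt d$ modulo squares and units of $k$---is the right one, but the middle step has a genuine gap. You assert (in two variants) that $(b+\sqrt d)/c$ is a unit of $\mathcal O_k$, or equivalently that $b+\sqrt d=\eta\cdot r^2$ with $\eta\in E_k$ and $r\in k^*$. This is false in general. Take $d=5$, $b=1$, $c=2$: then $1+\sqrt 5=2\varepsilon_5$ and the ideal $(1+\sqrt 5)=(2)$ is \emph{prime} in $\mathbb Q(\sqrt 5)$ (since $5\equiv 5\pmod 8$), not a square ideal; correspondingly $(1+\sqrt 5)/2=\varepsilon_5$ is a unit but $(b+\sqrt d)/c$ has no reason to lie in $\mathcal O_k$ when $c\nmid 2$. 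What is actually true is that $(b+\sqrt d)$ and $(\sqrt d-b)$ multiply to $(c)^2$ and are coprime away from the primes above $2$ (because $\gcd(b,d)=1$), so only the \emph{odd} part of $(b+\sqrt d)$ is a square ideal; the odd class number then makes that part a principal square. The leftover $2$-part is precisely what produces the dichotomy $a'\in\{a,2a\}$. In the example one gets directly $a(5+\sqrt 5)=2a\,\varepsilon_5\sqrt 5$, matching $a'=2a$ for $d\equiv1\pmod4$, $b$ odd. So the factor of $2$ is not a cosmetic adjustment \emph{after} you have reached $L=\mathbb Q(\sqrt{a\varepsilon_d\sqrt d})$, as your write-up suggests; it is already present in the ideal-theoretic step and your intermediate conclusion ``$L=\mathbb Q(\sqrt{a\varepsilon_d\sqrt d})$'' is simply wrong in that case.

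Your closing idea---once one knows $b+\sqrt d\equiv \varepsilon_d$ or $2\varepsilon_d$ in $k^*/{k^*}^2$, decide which by matching $\delta_{L/k}$ from Lemma~\ref{lemma 2.1; discriminants} against the discriminant of $k(\sqrt{a'\varepsilon_d\sqrt d})/k$---is a legitimate way to finish, but it only works after you replace the false unconditional claim by the honest two-case statement above.
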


 \begin{proposition}[\cite{ref11}]\label{prop 2.4}
 	Let $L$ be an abelian extension of $\mathbb{Q}$ of degree $n$. 
 	If $n=r^s$, where $r$ is a prime number and $s$ is a positive integer, then 
 	$$L^{(*)} =\left(\prod_{p/\delta_L, p \neq r }M_p\right)L,$$
 	where $M_p$ is the unique subfield of degree $e_p$ (the ramification index of $p$ in $L$) over $\mathbb{Q}$ of $\mathbb{Q}(\xi_p)$ the $p$-th cyclotomic field and $\delta_L$ is the discriminant of $L$. 
 \end{proposition}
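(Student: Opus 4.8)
The plan is to prove the two inclusions $F \subseteq L^{(*)}$ and $L^{(*)} \subseteq F$ separately, where $F := \bigl(\prod_{q \mid \delta_L,\, q \neq r} M_q\bigr)L$, working throughout via Kronecker--Weber: every abelian extension of $\mathbb{Q}$ is recorded by its finite group of Dirichlet characters, and I write $X_L$ for that of $L$ and $X_{L,q} = \{\chi_q : \chi \in X_L\}$ for the group of local components at a rational prime $q$. Two standard facts drive the argument: the conductor--discriminant formula (so that the primes dividing $\delta_L$ are exactly the ramified ones), and the identification $X_{L,q} \cong \widehat{I_q(L)}$ of local components with characters of the inertia group, which yields $|X_{L,q}| = e_q$, the ramification index of $q$ in $L$ (this uses nothing about tameness and so applies at $r$ too).

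For $F \subseteq L^{(*)}$ I must check that $F/L$ is unramified at every prime of $L$. First I would fix $q \mid \delta_L$ with $q \neq r$. Since $\mathrm{Gal}(L/\mathbb{Q})$ has order $r^s$ and $q$ is tamely ramified in $L$, the inertia group $I_q(L)$ is cyclic of order $e_q = r^{t_q}$ for some $t_q \leq s$, and it is $(\mathbb{Z}/q)^{\times}$ modulo its unique subgroup of index $e_q$. But $M_q$ is, by construction, the fixed field of exactly that subgroup, so $X_{M_q} = X_{L,q}$; consequently the inertia at $q$ inside $L M_q$ maps diagonally into $I_q(L) \times \mathrm{Gal}(M_q/\mathbb{Q})$ and stays cyclic of order $e_q$. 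As neither $L$ nor the remaining $M_{q'}$ add ramification at $q$, we get $e_q(F) = e_q(L)$, i.e.\ $F/L$ is unramified above $q$. At a prime above $r$ the compositum $\prod_{q \neq r} M_q$ is unramified, hence via $\mathrm{Gal}(F/L) \cong \mathrm{Gal}\bigl(\prod_{q}M_q \,/\, L \cap \prod_{q}M_q\bigr)$ so is $F/L$; the same identification disposes of the infinite places (each $M_q$ is totally real when $r$ is odd, and $L = K$ is totally imaginary in the application, so nothing is lost there).

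For $L^{(*)} \subseteq F$ I would take any abelian $E/\mathbb{Q}$ with $L \subseteq E$ and $E/L$ unramified everywhere, and show $X_E \subseteq X_L \cdot \prod_{q \neq r} X_{M_q} = X_F$. No prime unramified in $L$ ramifies in $E$, and for $q \mid \delta_L$ with $q \neq r$ the equality $e_q(E) = e_q(L) = r^{t_q}$ makes $q$ tame in $E$, so $X_{E,q}$ is a subgroup of the \emph{cyclic} group $\widehat{(\mathbb{Z}/q)^{\times}}$ of order $e_q$, whence $X_{E,q} = X_{M_q}$ by uniqueness of subgroups. At $r$, the identity $|X_{E,r}| = e_r(E) = e_r(L) = |X_{L,r}|$ together with $X_{L,r} \subseteq X_{E,r}$ gives $X_{E,r} = X_{L,r}$. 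Now, given $\chi \in X_E$, write $\chi = \chi_r \prod_{q \neq r} \chi_q$ in local components: each $\chi_q$ ($q \neq r$) lies in $X_{M_q}$, while $\chi_r \in X_{E,r} = X_{L,r}$ equals $\lambda_r$ for some $\lambda \in X_L$, so that $\chi = \lambda \cdot \prod_{q \neq r}(\chi_q \lambda_q^{-1}) \in X_L \cdot \prod_{q \neq r} X_{M_q}$, using $\lambda_q \in X_{M_q}$ as well. Hence $E \subseteq F$, and with the first part, $L^{(*)} = F$.

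The hard part is the matching $X_{L,q} = X_{M_q}$ for $q \neq r$, and it is precisely here that the hypothesis $n = r^s$ is indispensable: a prime-power degree forces every tame inertia index to be a power of $r$, hence the tame inertia to be cyclic, and a cyclic group has a unique subgroup of each order — which is what rigidly pins the ``$q$-ramified part'' of $L$ (and of any admissible $E$) onto $M_q$. For general $n$ this breaks down. The remaining points — that at $r$ nothing beyond what already sits inside $L$ can occur, and the behaviour at the infinite places — are routine, the latter being vacuous for the totally imaginary fields $K = k_0(\sqrt{\delta})$ treated in this paper.
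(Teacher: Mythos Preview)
The paper does not prove this proposition; it is quoted from Ishida's monograph \cite{ref11} and used as a black box. So there is no ``paper's own proof'' to compare against.

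Your argument via Dirichlet characters is correct and is essentially the classical route (and the one Ishida himself takes). The two key identifications you invoke are both valid: for any abelian $E/\mathbb{Q}$ and any prime $q$, the local-component group $X_{E,q}$ is canonically $\widehat{I_q(E)}$, so $|X_{E,q}| = e_q(E)$ holds at tame \emph{and} wild primes; and for $q \neq r$ the cyclicity of $(\mathbb{Z}/q)^{\times}$ forces $X_{L,q}$ (and $X_{E,q}$ for any admissible $E$) to coincide with $X_{M_q}$. Your handling of the $r$-component in the reverse inclusion --- picking $\lambda \in X_L$ with $\lambda_r = \chi_r$ and absorbing the remaining $\lambda_q^{-1}$ into $X_{M_q}$ --- is exactly the right manoeuvre. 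One small sharpening: in the forward inclusion you can bypass the ``diagonal inertia'' picture entirely and just compute $X_{F,q} = X_{L,q}\cdot X_{M_q} = X_{M_q}$ directly, which immediately gives $e_q(F) = e_q(L)$.

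Your caveat about the archimedean places is well placed. As stated (with the paper's \emph{wide} definition of the genus field), the formula can fail for $r = 2$ when $L$ has a real place and some $M_q$ is imaginary; Ishida's original statement is in the narrow sense. Since the fields $K$ in this paper are CM, the distinction evaporates in the applications, exactly as you note.
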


 \begin{proposition}[\cite{ref15}, p. 160]\label{prop 2.5}
 	If $p$ is a prime number such that $p\equiv 1\pmod 4$, then $\mathbb{Q}(\sqrt{\varepsilon_p^*\sqrt{p}})$ is the quartic subfield of $\mathbb{Q}(\xi_p)$, where
 	$\varepsilon_p^*=\left(\frac{2}{p}\right)\varepsilon_p$ and $\mathbb{Q}(\sqrt{2+\sqrt{2}})$ is the real quartic field of $\mathbb{Q}(\xi_{16})$.	 
 \end{proposition}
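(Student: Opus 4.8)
I would prove the two assertions separately. For the claim on $\mathbb{Q}(\xi_{16})$ I argue directly: since $\xi_{16}=e^{i\pi/8}$, the maximal real subfield of $\mathbb{Q}(\xi_{16})$ is $\mathbb{Q}(\xi_{16}+\xi_{16}^{-1})=\mathbb{Q}(2\cos(\pi/8))$, which has degree $4$ over $\mathbb{Q}$ (as $[\mathbb{Q}(\xi_{16}):\mathbb{Q}]=8$) and is the only real quartic subfield of $\mathbb{Q}(\xi_{16})$. From $4\cos^{2}(\pi/8)=2+2\cos(\pi/4)=2+\sqrt 2$ one gets $2\cos(\pi/8)=\sqrt{2+\sqrt 2}$, and since $x^{4}-4x^{2}+2$ (Eisenstein at $2$) is the minimal polynomial of $\sqrt{2+\sqrt 2}$, the field $\mathbb{Q}(\sqrt{2+\sqrt 2})$ has degree $4$ and hence equals $\mathbb{Q}(\xi_{16})^{+}$.

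For the main assertion, set $k_0=\mathbb{Q}(\sqrt p)$ and let $K_4$ be the quartic subfield of $\mathbb{Q}(\xi_p)$, which is well defined because $\mathrm{Gal}(\mathbb{Q}(\xi_p)/\mathbb{Q})$ is cyclic of order $p-1\equiv 0\pmod 4$. Then $K_4$ is cyclic over $\mathbb{Q}$, contains the unique quadratic subfield of $\mathbb{Q}(\xi_p)$, namely $\mathbb{Q}(\sqrt p)=k_0$ (as $p\equiv 1\pmod 4$), and is unramified outside $p$. Since $p$ is prime with $p\equiv 1\pmod 4$ it is a sum of two squares, $p=b^{2}+c^{2}$ with $b,c>0$, so by the normal form recalled in Section~\ref{sec:2} one has $K_4=\mathbb{Q}(\sqrt{a(p+b\sqrt p)})$ for an odd square-free integer $a$ prime to $p$. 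Because $K_4\subset\mathbb{Q}(\xi_p)$ is unramified outside $p$, its absolute discriminant $\delta_{K_4}$ is a power of $p$; among the four values of $\delta_{K_4}$ listed in Lemma~\ref{lemma 2.1; discriminants} (with $d=p$) only the third, $\delta_{K_4}=a^{2}p^{3}$, can be a power of $p$, which (as $a$ is prime to $p$) forces $a=\pm 1$ and, in particular, $b$ even. Moreover $h(k_0)$ is odd (genus theory: $p$ is the only prime ramifying in $k_0/\mathbb{Q}$), so the lemma that follows Lemma~\ref{lemma 2.1; discriminants} gives $K_4=\mathbb{Q}(\sqrt{a\,\varepsilon_p\sqrt p})$ with $a=\pm1$ (here $a'=a$ since $b$ is even). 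Thus $K_4$ is one of the two distinct fields $\mathbb{Q}(\sqrt{\varepsilon_p\sqrt p})$ and $\mathbb{Q}(\sqrt{-\varepsilon_p\sqrt p})$ (distinct because $-1\notin(k_0^{*})^{2}$).

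It remains to fix the sign, which I do via the signature. Complex conjugation is the unique element of order $2$ in $\mathrm{Gal}(\mathbb{Q}(\xi_p)/\mathbb{Q})\cong C_{p-1}$, and it lies in the index-$4$ subgroup fixing $K_4$ exactly when $8\mid p-1$; hence $K_4$ is totally real when $p\equiv 1\pmod 8$ and totally imaginary when $p\equiv 5\pmod 8$. On the other hand, since $p$ is prime with $p\equiv 1\pmod 4$ the negative Pell equation is solvable, so $N_{k_0/\mathbb{Q}}(\varepsilon_p)=-1$, i.e. the conjugate of $\varepsilon_p$ over $\mathbb{Q}$ is $-\varepsilon_p^{-1}$; therefore the two real embeddings of $k_0$ send $\varepsilon_p\sqrt p$ to $\varepsilon_p\sqrt p>0$ and to $\varepsilon_p^{-1}\sqrt p>0$, so $\mathbb{Q}(\sqrt{\varepsilon_p\sqrt p})$ is totally real whereas $\mathbb{Q}(\sqrt{-\varepsilon_p\sqrt p})$ is totally imaginary. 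Since, for $p\equiv 1\pmod 4$, one has $\left(\frac{2}{p}\right)=1$ exactly when $p\equiv 1\pmod 8$, matching signatures identifies $K_4$ with $\mathbb{Q}(\sqrt{\left(\frac{2}{p}\right)\varepsilon_p\sqrt p})=\mathbb{Q}(\sqrt{\varepsilon_p^{*}\sqrt p})$, as asserted. (As a check: $k_0(\sqrt\mu)/\mathbb{Q}$ with $\mu\in k_0^{*}\setminus(k_0^{*})^{2}$ is cyclic quartic iff $N_{k_0/\mathbb{Q}}(\mu)\in p(\mathbb{Q}^{*})^{2}$, and indeed $N_{k_0/\mathbb{Q}}(\varepsilon_p^{*}\sqrt p)=\left(\frac{2}{p}\right)^{2}(-1)(-p)=p$.)

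The crux is the reduction of $K_4$ to the form $\mathbb{Q}(\sqrt{\pm\varepsilon_p\sqrt p})$: one must control the $2$-part of the discriminant so as to exclude fields such as $\mathbb{Q}(\sqrt{\pm 2\varepsilon_p\sqrt p})$ and the non-cyclic $\mathbb{Q}(\sqrt{\pm\sqrt p})$, and this is exactly where the oddness of $h(k_0)$, the precise discriminant list of Lemma~\ref{lemma 2.1; discriminants}, and the normalising factor $\left(\frac{2}{p}\right)$ in $\varepsilon_p^{*}$ all come into play; after that, the cyclicity and signature computations are short and the conductor-ramification argument closes the proof. A more computational alternative would evaluate the quartic Gaussian period of $\mathbb{Q}(\xi_p)$ explicitly in terms of $\sqrt p$ and $a$, where $p=a^{2}+b^{2}$, but the discriminant-and-signature route above is cleaner and uses only material already assembled in Section~\ref{sec:2}.
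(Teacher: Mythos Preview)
Your proof is correct. Note, however, that the paper does not supply its own proof of Proposition~\ref{prop 2.5}: the result is simply quoted from \cite[p.~160]{ref15}, so there is no argument in the paper to compare against.

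Your route is nonetheless well adapted to the paper's toolkit. The identification of $\mathbb{Q}(\xi_{16})^{+}$ is standard and your treatment is clean. For the main assertion you exploit exactly the structural results already assembled in Section~\ref{sec:2}: the unique normal form for cyclic quartic fields, the discriminant table of Lemma~\ref{lemma 2.1; discriminants} (to force $a=\pm1$ and $b$ even from the condition that $\delta_{K_4}$ be a $p$-power), and the lemma following it (using $h(k_0)$ odd) to pass to the shape $\mathbb{Q}(\sqrt{\pm\varepsilon_p\sqrt p})$. The sign is then pinned down by matching signatures: your computation that $\varepsilon_p\sqrt p$ is totally positive (because $N(\varepsilon_p)=-1$ sends the nontrivial embedding to $\varepsilon_p^{-1}\sqrt p>0$) is correct, as is the observation that complex conjugation lies in the index-$4$ subgroup of $\mathrm{Gal}(\mathbb{Q}(\xi_p)/\mathbb{Q})$ precisely when $-1$ is a fourth power modulo $p$, i.e.\ when $8\mid p-1$. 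The parenthetical cyclicity check via $N_{k_0/\mathbb{Q}}(\varepsilon_p^{*}\sqrt p)=p\in p(\mathbb{Q}^{*})^{2}$ is a nice sanity check.

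Lemmermeyer's original argument proceeds instead through quartic Gauss sums and the explicit quartic period, which is the more classical path; your discriminant-and-signature approach is a legitimate and arguably more transparent alternative that stays entirely within the paper's preliminary material.
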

 

 We close this section with the following two results.
 \begin{lemma}[\text{\cite{Gr}}]\label{ambiguous class number formula} Let $k/k'$ be a quadratic extension of number fields. If the class number of $k'$ is odd, then the rank of the $2$-class group of $k$ is given by
 	$$r_2({\mathbf{C}l(k)})=t-1-e,$$
 	where  $t$ is the number of  ramified primes (finite or infinite) in the extension  $k/k'$ and $e$ is  defined by   $2^{e}=[E_{k'}:E_{k'} \cap N_{k/k'}(k^*)]$.
 \end{lemma}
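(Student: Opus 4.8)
The plan is to deduce the formula from Chevalley's ambiguous class number formula and then isolate the $2$-rank using the hypothesis that $h(k')$ is odd.

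Write $G=\mathrm{Gal}(k/k')=\langle\sigma\rangle$, a group of order $2$. First I would recall Chevalley's ambiguous class number formula for the cyclic extension $k/k'$:
$$|\mathbf{C}l(k)^{G}|=h(k')\cdot\frac{\prod_{v}e_{v}}{[k:k']\,[E_{k'}:E_{k'}\cap N_{k/k'}(k^{*})]},$$
the product being over all places $v$ of $k'$, finite and infinite, and $e_{v}$ the ramification index of $v$ in $k/k'$. Since $[k:k']=2$ each $e_{v}\in\{1,2\}$, so $\prod_{v}e_{v}=2^{t}$ with $t$ the number of ramified places; writing $2^{e}=[E_{k'}:E_{k'}\cap N_{k/k'}(k^{*})]$ this becomes
$$|\mathbf{C}l(k)^{G}|=h(k')\cdot 2^{\,t-1-e}.$$

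Next I would use the oddness of $h(k')$ to pass from $|\mathbf{C}l(k)^{G}|$ to $r_{2}(\mathbf{C}l(k))$. Let $\mathbf{C}l(k)_{2}$ denote the $2$-Sylow subgroup of $\mathbf{C}l(k)$; it is $G$-stable. From $\sigma^{2}=1$ one has the identity $1+\sigma=j_{k/k'}\circ N_{k/k'}$ on $\mathbf{C}l(k)$, where $j_{k/k'}$ is extension of ideal classes. The norm $N_{k/k'}$ carries $\mathbf{C}l(k)_{2}$ into the $2$-Sylow subgroup of $\mathbf{C}l(k')$, which is trivial because $h(k')$ is odd; hence $1+\sigma$ kills $\mathbf{C}l(k)_{2}$, that is, $\sigma$ acts as $-1$ on $\mathbf{C}l(k)_{2}$. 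Therefore $\mathbf{C}l(k)_{2}^{G}=\{x\in\mathbf{C}l(k)_{2}:2x=0\}=\mathbf{C}l(k)_{2}[2]$, so that $|\mathbf{C}l(k)_{2}^{G}|=2^{r_{2}(\mathbf{C}l(k))}$. Splitting $\mathbf{C}l(k)$ into its $2$-part and its odd part and taking $G$-invariants, the $2$-part of $|\mathbf{C}l(k)^{G}|$ equals $|\mathbf{C}l(k)_{2}^{G}|=2^{r_{2}(\mathbf{C}l(k))}$; comparing with the formula above, whose $2$-part is $2^{\,t-1-e}$ since $h(k')$ is odd, gives $r_{2}(\mathbf{C}l(k))=t-1-e$.

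The only real input is Chevalley's formula itself, which I would simply cite from \cite{Gr}; a self-contained derivation would require setting up the exact sequences linking $\mathbf{C}l(k)^{G}$ with the groups of ambiguous and principal ambiguous ideals and with $\widehat{H}^{0}(G,E_{k})$ and $H^{1}(G,E_{k})$, then invoking Hilbert 90 and the product formula for $\widehat{H}^{0}$. The genuinely new content here is the short argument of the previous paragraph, whose only delicate point is the observation that $h(k')$ odd forces $\sigma$ to act as $-1$ on the $2$-Sylow subgroup of $\mathbf{C}l(k)$ — everything after that is bookkeeping with $2$-parts.
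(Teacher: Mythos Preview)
Your proof is correct and is the standard derivation of this rank formula from Chevalley's ambiguous class number formula. The paper itself does not supply a proof of this lemma: it is stated with a citation to \cite{Gr} and used as a black box, so there is nothing to compare against at the level of argument. Your write-up in fact goes further than the paper does, since you explain how the hypothesis that $h(k')$ is odd forces $\sigma$ to act as $-1$ on $\mathbf{C}l(k)_{2}$ (via $1+\sigma=j_{k/k'}\circ N_{k/k'}$ and the triviality of the $2$-Sylow of $\mathbf{C}l(k')$), which is exactly the step that converts $|\mathbf{C}l(k)^{G}|$ into $2^{r_{2}(\mathbf{C}l(k))}$ times an odd factor.
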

 \begin{proposition}[\cite{ref8}]\label{prop 2.7}
 	Let $k/k'$ be a quadratic extension of number fields  and $\mu$ a number of $k'$, coprime with $2$, such that $k=k'(\sqrt{\mu})$. The extension $k/k'$ is unramified  at all finite primes of $k'$ if  and only  if     the two following items  hold:
 	\begin{enumerate}[\indent\rm 1.]
 		\item the ideal generated by $\mu$ is the square of the fractional ideal of $k'$, and 
 		\item there exists a nonzero number $\xi$ of $k'$ verifying $\mu \equiv \xi^2\pmod{4}$.
 	\end{enumerate}
 \end{proposition}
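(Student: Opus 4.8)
The plan is to reduce the global assertion to a local one, completion by completion, handling the odd primes and the primes above $2$ separately. As a harmless normalization, after replacing $\mu$ by $\beta^{2}\mu$ for a suitable $\beta\in(k')^{\times}$ prime to $2$ --- which alters neither $k=k'(\sqrt{\mu})$ nor the validity of items~1 and~2 --- I may assume $\mu\in\mathcal O_{k'}$ and $(\mu)$ prime to $(2)$, so that $\mu$ is a unit at every prime above $2$. Since the relative discriminant of $k/k'$ is divisible exactly by the finite primes that ramify, $k/k'$ is unramified at all finite primes of $k'$ if and only if $k'_{\mathfrak p}(\sqrt{\mu})/k'_{\mathfrak p}$ is unramified for every finite prime $\mathfrak p$ of $k'$. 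For odd $\mathfrak p$ the tame criterion gives that $k'_{\mathfrak p}(\sqrt{\mu})/k'_{\mathfrak p}$ is unramified exactly when $v_{\mathfrak p}(\mu)$ is even (divide $\mu$ by a square to make it a unit at $\mathfrak p$; the square root of a unit generates an unramified extension in odd residue characteristic). Together with $v_{\mathfrak p}(\mu)=0$ for $\mathfrak p\mid 2$, unramifiedness at all odd primes is therefore equivalent to $(\mu)$ being the square of an ideal of $k'$ --- this is item~1.

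Next I would settle the primes $\mathfrak p\mid 2$, where $\mu$ is a unit. The key device is the substitution $y=\tfrac12\bigl(\sqrt{1+4\gamma}-1\bigr)$, which satisfies $y^{2}+y=\gamma$; hence for $\gamma\in\mathcal O_{k'_{\mathfrak p}}$ one has $k'_{\mathfrak p}\bigl(\sqrt{1+4\gamma}\bigr)=k'_{\mathfrak p}(y)$ with $y$ a root of the monic polynomial $Y^{2}+Y-\gamma\in\mathcal O_{k'_{\mathfrak p}}[Y]$, whose reduction modulo $\mathfrak p$ is separable; so this extension is unramified. Running the substitution backwards starting from an Artin--Schreier generator shows, conversely, that the unramified quadratic extension of $k'_{\mathfrak p}$ has the shape $k'_{\mathfrak p}\bigl(\sqrt{1+4\gamma_{0}}\bigr)$ for some $\gamma_{0}\in\mathcal O_{k'_{\mathfrak p}}$. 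From these two facts I obtain a local dictionary at $\mathfrak p\mid 2$: if $\mu\equiv\xi^{2}\pmod 4$ for some $\xi\in(k')^{\times}$, then at $\mathfrak p$ the element $\xi$ is a unit and $\mu/\xi^{2}=1+4\gamma$ with $\gamma\in\mathcal O_{k'_{\mathfrak p}}$, so $k'_{\mathfrak p}(\sqrt{\mu})/k'_{\mathfrak p}$ is unramified; conversely, if $k'_{\mathfrak p}(\sqrt{\mu})/k'_{\mathfrak p}$ is unramified then, being either trivial or the (unique) unramified quadratic extension, $\mu$ is $w^{2}$ or $(1+4\gamma_{0})w^{2}$ for some unit $w$, whence $\mu\equiv\xi_{\mathfrak p}^{2}\pmod{4\mathcal O_{k'_{\mathfrak p}}}$ with the unit $\xi_{\mathfrak p}=w$.

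Finally I would globalize these local squares into a single $\xi\in(k')^{\times}$. Granting item~1 and unramifiedness at all finite primes, the previous paragraph supplies for each $\mathfrak p\mid 2$ a unit $\xi_{\mathfrak p}\in\mathcal O_{k'_{\mathfrak p}}^{\times}$ with $v_{\mathfrak p}(\mu-\xi_{\mathfrak p}^{2})\ge v_{\mathfrak p}(4)$; by the Chinese Remainder Theorem I pick $\xi\in\mathcal O_{k'}$ with $\xi\equiv\xi_{\mathfrak p}\pmod{\mathfrak p^{\,2v_{\mathfrak p}(2)}}$ for all $\mathfrak p\mid 2$ --- the precision must be $2v_{\mathfrak p}(2)$, not $v_{\mathfrak p}(2)$, so as to force $\xi^{2}\equiv\xi_{\mathfrak p}^{2}$ to accuracy $v_{\mathfrak p}(4)$. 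Then $v_{\mathfrak p}(\mu-\xi^{2})\ge v_{\mathfrak p}(4)$ at each $\mathfrak p\mid 2$, while at odd $\mathfrak p$ we get $v_{\mathfrak p}(\mu-\xi^{2})\ge 0=v_{\mathfrak p}(4)$ for free since $\mu,\xi\in\mathcal O_{k'}$; hence $4\mid\mu-\xi^{2}$ in $\mathcal O_{k'}$, with $\xi\ne0$ because it is a unit at the primes above $2$ --- this is item~2. Conversely, items~1 and~2 yield unramifiedness at all finite primes directly from the odd-prime criterion and the local dictionary. The step I expect to be the real obstacle is the $2$-adic square-class bookkeeping in the middle paragraph --- establishing that the correct modulus is exactly $4$ (equivalently $\mathfrak p^{2e}$ with $e=v_{\mathfrak p}(2)$) rather than something coarser like $4\mathfrak p$ --- together with getting the approximation precision right; the remaining verifications are routine.
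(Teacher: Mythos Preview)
The paper does not prove this proposition: it is quoted from Hilbert \cite{ref8} and stated without argument, so there is no ``paper's own proof'' to compare against. Your proof is therefore being assessed on its own merits.

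Your argument is correct and is the standard route: localize, treat odd primes via the tame criterion (evenness of $v_{\mathfrak p}(\mu)$), and at primes above $2$ use the change of variable $y=\tfrac12(\sqrt{1+4\gamma}-1)$ to pass between the Kummer generator and an Artin--Schreier equation $Y^{2}+Y=\gamma$ with separable reduction. The globalization by the Chinese Remainder Theorem is also fine. One small inaccuracy: your parenthetical ``the precision must be $2v_{\mathfrak p}(2)$, not $v_{\mathfrak p}(2)$'' is overly cautious. Writing $e=v_{\mathfrak p}(2)$ and $\delta=\xi-\xi_{\mathfrak p}$ with $v_{\mathfrak p}(\delta)\ge N$, one has $\xi^{2}-\xi_{\mathfrak p}^{2}=\delta(2\xi_{\mathfrak p}+\delta)$, hence $v_{\mathfrak p}(\xi^{2}-\xi_{\mathfrak p}^{2})\ge N+\min(e,N)$; so already $N=e$ forces $v_{\mathfrak p}(\xi^{2}-\xi_{\mathfrak p}^{2})\ge 2e=v_{\mathfrak p}(4)$. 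Taking $N=2e$ as you do is harmless, just not necessary. Otherwise the write-up is sound, including the observation that item~2 forces $\xi$ to be a unit at each $\mathfrak p\mid 2$ (since $v_{\mathfrak p}(\mu)=0$), which you use implicitly when writing $\mu/\xi^{2}=1+4\gamma$.
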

 \section{\bf Genus fields of the imaginary cyclic quartic  fields : $\ K=\mathbb{Q}(\sqrt{-a{\varepsilon}_p \sqrt{p}})$}\label{sec:3}
 
 Let $p$ denote a prime number such that  $p=2$ or  $p \equiv  1\pmod 4$    and $a$ be a positive square-free integer coprime with  $p$. In the present section, we shall investigate the genus field of the imaginary  cyclic quartic fields $K=\mathbb{Q}(\sqrt{-a{\varepsilon}_p \sqrt{p}})$. Note that $K$ is  a $\mathrm{CM}$-field with maximal real subfield $k_0=\mathbb{Q}(\sqrt{p})$. For the construction of the genus field of $K$, we shall need  the factorization of the integer $a$, therefore we put: 
 \begin{eqnarray}
 	a=\displaystyle\prod_{i=1}^n q_i \text{  or } 2\displaystyle\prod_{i=1}^n q_i,
 \end{eqnarray}
 where  $q_1, q_2, \dots, q_n$ are distinct odd primes. Assume    that the Legendre symbols $\left(\dfrac{p}{q_j}\right)=1$ for $1\leq j \leq m$ $( m\leq n$, the case $m=0$ is included here$)$ and $\left(\dfrac{p}{q_j}\right)=-1$ for $m+1 \leq j \leq n$.\\
 Let $e_\ell$ denote   the ramification index of a prime number $\ell$ in $K/\mathbb{Q}$ and put
 $q_j^{*}=(-1)^{\frac{q_j-1}{2}}q_j$.

 \begin{proposition} \label{prop 3.1}
 	Assume that  $p\equiv 5 \pmod 8$, then
 	\begin{enumerate}[\rm 1.]
 		\item $ \text{ If } a=(\displaystyle\prod_{i=1}^n q_i) \equiv 1\pmod4$, we have $$K^{(*)}=K(\sqrt{{q_1}^*}, \sqrt{{q_2}^*}, \dots,\sqrt{{q_n}^*});$$
 		\item $\text{ If }  a=(\displaystyle\prod_{i=1}^n q_i) \equiv 3\pmod 4,$ we have $$ K^{(*)}=K(\sqrt{-1}, \sqrt{{q_1}^*}, \sqrt{{q_2}^*}, \dots, \sqrt{{q_n}^*});$$
 		\item  $\text{ If } a=2\displaystyle\prod_{i=1}^n q_i \; \text{ and } \; (\displaystyle\prod_{i=1}^n q_i) \equiv 1\pmod4$,  we have
 		$$K^{(*)}=K(\sqrt{2}, \sqrt{{q_1}^*}, \sqrt{{q_2}^*}, \dots, \sqrt{{q_n}^*});$$
 		\item $\text{ If } a=2\displaystyle\prod_{i=1}^n q_i \; \text{ and } \; (\displaystyle\prod_{i=1}^n q_i) \equiv 3\pmod4$,  we have
 		$$ K^{(*)}=K(\sqrt{-2}, \sqrt{{q_1}^*}, \sqrt{{q_2}^*}, \dots, \sqrt{{q_n}^*}).$$
 	\end{enumerate}
 \end{proposition}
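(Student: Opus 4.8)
The plan is to apply Proposition~\ref{prop 2.4} to the cyclic quartic field $K$, whose degree $4=2^{2}$ has $r=2$; this reduces the proof to three tasks: (i) determining the odd rational primes that ramify in $K/\mathbb{Q}$, (ii) computing their ramification indices $e_{\ell}$, and (iii) identifying the corresponding subfields $M_{\ell}$ of $\mathbb{Q}(\xi_{\ell})$ and simplifying the product $\big(\prod_{\ell\mid\delta_{K},\,\ell\neq 2}M_{\ell}\big)K$.

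For task (i) I would work in the tower $\mathbb{Q}\subset k_{0}\subset K$, with $K=k_{0}(\sqrt{\delta})$ and $\delta=-a\varepsilon_{p}\sqrt{p}$. Since $\mathcal{O}_{k_{0}}[\sqrt{\delta}]$ has discriminant $4\delta$ over $\mathcal{O}_{k_{0}}$, the relative discriminant satisfies $\Delta_{K/k_{0}}\mid 4\delta\mathcal{O}_{k_{0}}$, and using $N_{k_{0}/\mathbb{Q}}(\varepsilon_{p})=-1$ (valid because $p\equiv 1\pmod 4$ is prime) one gets $N_{k_{0}/\mathbb{Q}}(\delta)=a^{2}p$, whence $\delta_{K}$ divides $16\,a^{2}p^{3}$ (the same being readable off Lemma~\ref{lemma 2.1; discriminants}). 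So the only odd primes that can ramify in $K$ are $p$ and $q_{1},\dots,q_{n}$. For task (ii): $p$ ramifies in $k_{0}/\mathbb{Q}$, and the prime $\mathfrak{p}=(\sqrt{p})$ of $k_{0}$ has $v_{\mathfrak{p}}(\delta)=1$, odd, with $\mathfrak{p}\nmid 2$, so $\mathfrak{p}$ ramifies further in $K/k_{0}$ and $e_{p}=4$; each $q_{i}$ is unramified in $k_{0}/\mathbb{Q}$ (odd and prime to $p=\delta_{k_{0}}$), while any prime $\mathfrak{q}\mid q_{i}$ of $k_{0}$ has $v_{\mathfrak{q}}(\delta)=v_{\mathfrak{q}}(a)=1$, odd, with $\mathfrak{q}\nmid 2$, so $e_{q_{i}}=2$. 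In particular $p,q_{1},\dots,q_{n}$ all divide $\delta_{K}$, so these are exactly the odd ramified primes.

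For task (iii), Proposition~\ref{prop 2.4} gives $K^{(*)}=M_{p}\big(\prod_{i=1}^{n}M_{q_{i}}\big)K$. Since $e_{q_{i}}=2$, the field $M_{q_{i}}$ is the quadratic subfield $\mathbb{Q}(\sqrt{q_{i}^{*}})$ of $\mathbb{Q}(\xi_{q_{i}})$; since $e_{p}=4$ and $p\equiv 5\pmod 8$ gives $\left(\frac{2}{p}\right)=-1$, Proposition~\ref{prop 2.5} yields $M_{p}=\mathbb{Q}\big(\sqrt{\varepsilon_{p}^{*}\sqrt{p}}\big)=\mathbb{Q}\big(\sqrt{-\varepsilon_{p}\sqrt{p}}\big)$. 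Now the identity $(-\varepsilon_{p}\sqrt{p})(-a\varepsilon_{p}\sqrt{p})=a(\varepsilon_{p}\sqrt{p})^{2}$, together with $\varepsilon_{p}\sqrt{p}\in k_{0}\subset K$ and $\sqrt{-a\varepsilon_{p}\sqrt{p}}\in K$, shows $K(\sqrt{-\varepsilon_{p}\sqrt{p}})=K(\sqrt{a})$, so $K^{(*)}=K(\sqrt{a},\sqrt{q_{1}^{*}},\dots,\sqrt{q_{n}^{*}})$. Finally I would replace $\sqrt{a}$ by a normalized generator: every $q_{i}^{*}\equiv 1\pmod 4$, hence $\prod_{i}q_{i}^{*}\equiv 1\pmod 4$ and equals $\prod_{i}q_{i}$ or $-\prod_{i}q_{i}$ according as $\prod_{i}q_{i}\equiv 1$ or $3\pmod 4$; multiplying $\sqrt{a}=\sqrt{\prod_{i}q_{i}}$ (resp. $\sqrt{2\prod_{i}q_{i}}$) by $\prod_{i}\sqrt{q_{i}^{*}}\in K^{(*)}$ then leaves, up to the $\sqrt{q_{i}^{*}}$ already adjoined, nothing new, $\sqrt{-1}$, $\sqrt{2}$, or $\sqrt{-2}$ according to the four cases listed, giving the stated descriptions.

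The one genuinely delicate point is the ramification analysis: one must be sure no odd prime other than $p$ and the $q_{i}$ enters $\delta_{K}$, and — more importantly — that $p$ is \emph{totally} ramified ($e_{p}=4$, not $e_{p}=2$), since it is precisely the quartic subfield $M_{p}$ that produces the generator $\sqrt{a}$ and hence the $\sqrt{-1},\sqrt{2},\sqrt{-2}$ appearing in cases~2--4; this is where $N_{k_{0}/\mathbb{Q}}(\varepsilon_{p})=-1$ and the oddness of $v_{\mathfrak{p}}(\delta)$ are used. Everything after the ramification indices are fixed is routine Kummer theory and Legendre-symbol bookkeeping.
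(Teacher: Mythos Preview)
Your proposal is correct and follows essentially the same approach as the paper: both apply Proposition~\ref{prop 2.4} after determining that the odd ramified primes are $p$ (with $e_p=4$) and the $q_i$ (with $e_{q_i}=2$), and both invoke Proposition~\ref{prop 2.5} to identify $M_p=\mathbb{Q}(\sqrt{-\varepsilon_p\sqrt{p}})$ since $\left(\frac{2}{p}\right)=-1$. Your intermediate identity $K(\sqrt{-\varepsilon_p\sqrt{p}})=K(\sqrt{a})$ and the subsequent reduction of $\sqrt{a}$ modulo the $\sqrt{q_i^{*}}$ make explicit the simplification that the paper simply asserts case by case, but this is a matter of detail rather than a different method.
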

 \begin{proof}
 	We have $p\equiv 5 \pmod 8$, then by the expression of the absolute discriminant of $K$ (cf. Lemma \ref{lemma 2.1; discriminants}), the prime numbers of $\mathbb{Q}$ that   ramify in $K$ are :
 	\begin{enumerate}[\indent\rm $\bullet$]
 		\item 	the odd prime divisors of $a$ (with  ramification index   equals $2$),
 		\item  the prime $p$ (with $e_p=4$),
 		\item and  $2$ if $a\not\equiv 1\pmod 4$ (with $e_2=2$).
 	\end{enumerate}
 	On the other hand, we know  that  $\mathbb{Q}(\xi_p)/\mathbb{Q}$ is a Galois extension such that $$\mathrm{Gal}(\mathbb{Q}(\xi_p)/\mathbb{Q}) \simeq {(\mathbb{Z}/p\mathbb{Z})}^* \simeq \mathbb{Z}/(p-1)\mathbb{Z},$$ and if $m$ divides $p-1$, then $\mathbb{Q}(\xi_p)$ contains a unique subfield of degree $m$ over $\mathbb{Q}$. Since $p$ is the unique prime which is ramified in $\mathbb{Q}(\xi_p)$, then  $\mathbb{Q}(\sqrt{{-{\varepsilon}}_p\sqrt{p}})$ is the unique subfield of degree 4 of $\mathbb{Q}(\xi_p)$ (Proposition \ref{prop 2.5}). Therefore, by     Proposition \ref{prop 2.4}, we   get: 
 	\begin{enumerate}[\rm $1.$]
 		\item
 		If $a=\displaystyle\prod_{i=1}^n q_i\equiv 1\pmod 4$,  then:
 		
 		$$ K^{(*)}=\mathbb{Q}(\sqrt{{q_1}^*})\mathbb{Q}(\sqrt{{q_2}^*})\dots\mathbb{Q}(\sqrt{{q_n}^*})\mathbb{Q}(\sqrt{{-{\varepsilon}}_p\sqrt{p}})K=K(\sqrt{{q_1}^*}, \sqrt{{q_2}^*}, \dots,\sqrt{{q_n}^*}).$$
 		\item If  $a=\displaystyle\prod_{i=1}^n q_i\equiv 3\pmod 4$, then:
 		$$ K^{(*)}=\mathbb{Q}(\sqrt{{q_1}^*})\mathbb{Q}(\sqrt{{q_2}^*})\dots\mathbb{Q}(\sqrt{{q_n}^*})\mathbb{Q}(\sqrt{{-{\varepsilon}}_p\sqrt{p}})K=K(\sqrt{-1}, \sqrt{{q_1}^*}, \sqrt{{q_2}^*}, \dots, \sqrt{{q_n}^*}).$$  
 		\item If $a=2\displaystyle\prod_{i=1}^n q_i$ and $\displaystyle\prod_{i=1}^n q_i\equiv 1\pmod 4$,  then:
 		$$ K^{(*)}=\mathbb{Q}(\sqrt{{q_1}^*})\mathbb{Q}(\sqrt{{q_2}^*})\dots\mathbb{Q}(\sqrt{{q_n}^*})\mathbb{Q}(\sqrt{{-{\varepsilon}}_p\sqrt{p}})K=K(\sqrt{2}, \sqrt{{q_1}^*}, \sqrt{{q_2}^*}, \dots, \sqrt{{q_n}^*}).$$
 		\item  If $a=2\displaystyle\prod_{i=1}^n q_i$ and 
 		$\displaystyle\prod_{i=1}^n q_i\equiv 3\pmod 4$,  then:
 		$$ K^{(*)}=\mathbb{Q}(\sqrt{{q_1}^*})\mathbb{Q}(\sqrt{{q_2}^*})\dots\mathbb{Q}(\sqrt{{q_n}^*})\mathbb{Q}(\sqrt{{-{\varepsilon}}_p\sqrt{p}})K=K(\sqrt{-2}, \sqrt{{q_1}^*}, \sqrt{{q_2}^*}, \dots, \sqrt{{q_n}^*}).$$
 	\end{enumerate}
 	Which completes the proof.
 \end{proof}  
 
 We similarly prove the following two propositions.
 \begin{proposition} 
 	Assume that  $p\equiv 1 \pmod 8$, then
 	\begin{enumerate}[\indent\rm 1.] 
 		\item If  $a=\displaystyle\prod_{i=1}^n q_i\equiv 3 \pmod 4$, then   $K^{(*)}=K(\sqrt{{q_1}^*}, \sqrt{{q_2}^*}, \dots, \sqrt{{q_n}^*}).$
 		\item If $a=\displaystyle\prod_{i=1}^n q_i\equiv 1 \pmod 4$, then $K^{(*)}=K(\sqrt{-1}, \sqrt{{q_1}^*}, \sqrt{{q_2}^*}, \dots, \sqrt{{q_n}^*}).$
 		\item If $a=2\displaystyle\prod_{i=1}^n q_i$    and   $\displaystyle\prod_{i=1}^n q_i\equiv 1 \pmod 4$,  then 
 		$K^{(*)}=K(\sqrt{-2}, \sqrt{{q_1}^*}, \sqrt{{q_2}^*}, \dots, \sqrt{{q_n}^*}).$
 		\item If $a=2\displaystyle\prod_{i=1}^n q_i$   and   $\displaystyle\prod_{i=1}^n q_i\equiv 3 \pmod 4$,  then $K^{(*)}=K(\sqrt{2}, \sqrt{{q_1}^*},\sqrt{{q_2}^*}, \dots,\sqrt{{q_n}^*}).$
 	\end{enumerate} 
 \end{proposition}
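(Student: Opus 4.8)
The proof is entirely parallel to that of Proposition \ref{prop 3.1}; the only genuine change is the description of the quartic subfield of $\mathbb{Q}(\xi_p)$. First I would determine, via Lemma \ref{lemma 2.1; discriminants} applied with $d=p$, the rational primes ramifying in $K/\mathbb{Q}$: the odd prime divisors $q_1,\dots,q_n$ of $a$, each with ramification index $2$; the prime $p$, with $e_p=4$ (indeed $(\sqrt p)$ has valuation $1$ at $-a\varepsilon_p\sqrt p$ since $\gcd(a,p)=1$ and $\varepsilon_p$ is a unit, hence $(\sqrt p)$ ramifies in $K/k_0$ and $p$ is totally ramified in $K/\mathbb{Q}$); and possibly $2$, whose precise behaviour is immaterial here because Proposition \ref{prop 2.4} only involves the odd ramified primes. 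Consequently $M_{q_i}=\mathbb{Q}(\sqrt{q_i^{*}})$ and, by Proposition \ref{prop 2.5}, $M_p$ is the unique quartic subfield of $\mathbb{Q}(\xi_p)$, namely $M_p=\mathbb{Q}(\sqrt{\varepsilon_p^{*}\sqrt p})$ with $\varepsilon_p^{*}=\left(\frac{2}{p}\right)\varepsilon_p$; since $p\equiv 1\pmod 8$ forces $\left(\frac{2}{p}\right)=1$, we get $M_p=\mathbb{Q}(\sqrt{\varepsilon_p\sqrt p})$ (this is exactly where the present case departs from $p\equiv 5\pmod 8$, for which $M_p=\mathbb{Q}(\sqrt{-\varepsilon_p\sqrt p})$).

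Next I would compute the compositum $M_pK$. Writing $K=k_0(\sqrt{-a\varepsilon_p\sqrt p})$ and $M_p=k_0(\sqrt{\varepsilon_p\sqrt p})$, and using $\sqrt{\varepsilon_p\sqrt p}\cdot\sqrt{-a\varepsilon_p\sqrt p}=\varepsilon_p\sqrt{-ap}$ together with $\sqrt p\in k_0\subset K$, one finds $M_pK=K(\sqrt{-a})$. Hence Proposition \ref{prop 2.4} gives
$$K^{(*)}=\mathbb{Q}(\sqrt{q_1^{*}})\cdots\mathbb{Q}(\sqrt{q_n^{*}})\,M_p K=K\bigl(\sqrt{q_1^{*}},\dots,\sqrt{q_n^{*}},\sqrt{-a}\bigr).$$
This is the exact analogue of the identity used in Proposition \ref{prop 3.1}, only with $\sqrt{a}$ replaced by $\sqrt{-a}$; it is precisely this sign flip that interchanges the congruence conditions on $\prod_{i=1}^n q_i$ between the two statements.

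Finally I would treat the four cases by rewriting $\sqrt{-a}$ modulo $K(\sqrt{q_1^{*}},\dots,\sqrt{q_n^{*}})$, using the identity $\prod_{i=1}^n q_i^{*}=(-1)^{\sum_{i}(q_i-1)/2}\prod_{i=1}^n q_i$, which equals $\prod_i q_i$ when $\prod_i q_i\equiv 1\pmod 4$ and $-\prod_i q_i$ when $\prod_i q_i\equiv 3\pmod 4$; thus $\sqrt{\prod_i q_i}$ (resp. $\sqrt{-\prod_i q_i}$) already lies in $K(\sqrt{q_1^{*}},\dots,\sqrt{q_n^{*}})$ in the first (resp. second) case. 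Splitting off this known radical from $\sqrt{-a}$ leaves, respectively: nothing new when $a=\prod_i q_i\equiv 3\pmod 4$; the radical $\sqrt{-1}$ when $a=\prod_i q_i\equiv 1\pmod 4$; the radical $\sqrt{-2}$ when $a=2\prod_i q_i$ and $\prod_i q_i\equiv 1\pmod 4$; and the radical $\sqrt{2}$ when $a=2\prod_i q_i$ and $\prod_i q_i\equiv 3\pmod 4$. Reading off the four resulting fields yields the asserted formulas. The step most likely to cause trouble is the correct pinning down of $M_p$ through Proposition \ref{prop 2.5} — the value of $\left(\frac{2}{p}\right)$ is what really drives the statement — together with the sign bookkeeping for $\prod_i q_i^{*}$; once these are in place the argument is purely formal manipulation of Kummer radicals.
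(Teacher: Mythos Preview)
Your proof is correct and follows exactly the approach the paper intends: it simply says ``We similarly prove the following two propositions'' after Proposition~\ref{prop 3.1}, so your write-up is a faithful (and in fact more explicit) execution of that ``similarly''. The key difference from the $p\equiv 5\pmod 8$ case is precisely the one you isolate --- Proposition~\ref{prop 2.5} now gives $M_p=\mathbb{Q}(\sqrt{\varepsilon_p\sqrt p})$ because $\left(\tfrac{2}{p}\right)=1$, whence $M_pK=K(\sqrt{-a})$ rather than $K(\sqrt a)$, and this sign flip is exactly what swaps the congruence conditions on $\prod_i q_i$ relative to Proposition~\ref{prop 3.1}. Your remark that the behaviour of $2$ is immaterial (since $r=2$ in Proposition~\ref{prop 2.4}) is also correct and a nice clarification of a point the paper passes over silently.
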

 
 \begin{proposition} 
 	If $p=2, \text{ then } K^{(*)}=K(\sqrt{{q_1}^*}, \sqrt{{q_2}^*}, \dots, \sqrt{{q_n}^*}).$
 	
 \end{proposition}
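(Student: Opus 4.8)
The plan is to run the argument of Proposition~\ref{prop 3.1} essentially verbatim; the only real difference is that for $p=2$ the prime $p$ \emph{is} the prime $r$ excluded from the genus field formula of Proposition~\ref{prop 2.4}, so no cyclotomic factor attached to $p$ survives and, unlike in Proposition~\ref{prop 3.1}, no splitting into congruence cases for $a$ is needed.

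First I would put $K$ in the normal form of Lemma~\ref{lemma 2.1; discriminants}. Since $\varepsilon_2=1+\sqrt2$, we have $\varepsilon_2\sqrt2=\sqrt2+2$, hence $K=\mathbb{Q}(\sqrt{-a(2+\sqrt2)})$, which is exactly the shape $\mathbb{Q}(\sqrt{a'(d+b\sqrt d)})$ with $d=2=1^2+1^2$ (so $b=c=1$) and $a'=-a$, an odd square-free integer coprime to $d$ (recall $a$ is coprime to $p=2$). We are in the case $d\equiv 0\pmod 2$ of Lemma~\ref{lemma 2.1; discriminants}, so $\delta_K=2^{8}a^{2}d^{3}=2^{11}a^{2}$ and $\delta_{K/k_0}=4a\sqrt2$ up to a unit. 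Consequently the rational primes ramifying in $K/\mathbb{Q}$ are exactly $2$ and the odd prime divisors $q_1,\dots,q_n$ of $a$; moreover each $q_i$ is unramified in $k_0/\mathbb{Q}$ (as $\delta_{k_0}=8$) while it ramifies in $K/k_0$ (as $q_i\mid\delta_{K/k_0}$), so its ramification index in $K/\mathbb{Q}$ is $e_{q_i}=2$.

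Next, $K$ is an abelian extension of $\mathbb{Q}$ of degree $4=2^{2}$, so Proposition~\ref{prop 2.4} applies with $r=2$, $s=2$, giving
$$K^{(*)}=\Big(\prod_{\substack{\ell\mid\delta_K\\ \ell\neq 2}}M_\ell\Big)K=\Big(\prod_{i=1}^n M_{q_i}\Big)K,$$
where $M_{q_i}$ is the unique subfield of $\mathbb{Q}(\xi_{q_i})$ of degree $e_{q_i}=2$ over $\mathbb{Q}$, i.e. the unique quadratic subfield of the $q_i$-th cyclotomic field, namely $\mathbb{Q}(\sqrt{q_i^{*}})$ with $q_i^{*}=(-1)^{(q_i-1)/2}q_i$. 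Hence
$$K^{(*)}=\mathbb{Q}(\sqrt{q_1^{*}})\mathbb{Q}(\sqrt{q_2^{*}})\cdots\mathbb{Q}(\sqrt{q_n^{*}})\,K=K(\sqrt{q_1^{*}},\sqrt{q_2^{*}},\dots,\sqrt{q_n^{*}}),$$
which is the claimed equality.

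I do not expect a serious obstacle: the whole content is the normalization $\varepsilon_2\sqrt2=2+\sqrt2$ together with the discriminant bookkeeping of the second step, after which Proposition~\ref{prop 2.4} finishes the job; the absence of an $M_p$-factor (because $p=2=r$) is precisely why there are no cases here. If a sanity check is wanted, one can verify $[K^{(*)}:K]=2^{n}$ directly: since $K$ is cyclic quartic with unique quadratic subfield $k_0=\mathbb{Q}(\sqrt2)$, no nontrivial product $\prod_{i\in S}q_i^{*}$ with $\emptyset\neq S\subseteq\{1,\dots,n\}$ is a square or twice a square in $\mathbb{Q}$, so $q_1^{*},\dots,q_n^{*}$ are independent modulo $K^{*2}$ — but this only reconfirms what Proposition~\ref{prop 2.4} already gives.
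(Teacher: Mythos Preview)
Your proposal is correct and follows exactly the approach the paper intends: the paper simply remarks that this proposition (and the $p\equiv 1\pmod 8$ one) is proved ``similarly'' to Proposition~\ref{prop 3.1}, and your write-up is a faithful, detailed instantiation of that argument, including the key observation that $p=2$ coincides with the excluded prime $r$ in Proposition~\ref{prop 2.4}, so no $M_p$ factor appears and no case distinction on $a$ is needed. The discriminant bookkeeping via Lemma~\ref{lemma 2.1; discriminants} with $d=2$, $b=c=1$, $a'=-a$ is correct, as is the identification $e_{q_i}=2$ and $M_{q_i}=\mathbb{Q}(\sqrt{q_i^{*}})$.
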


 \section{\bf Hilbert genus fields of the imaginary  cyclic quartic fields : $\ K=\mathbb{Q}(\sqrt{-a{\varepsilon}_p \sqrt{p}})$}\label{sec:4}
 Keep the notations of the previous sections.
 At this stage, we can start the construction of the Hilbert genus field of the imaginary cyclic quartic field  $  K=\mathbb{Q}(\sqrt{-a{\varepsilon}_p \sqrt{p}})$, 
 however, we must first expose some more ingredients of our proofs. We have the following lemmas.
 \begin{lemma}\label{lemma 4.1}
 	Let $k/k'$ be a quadratic extension of number fields such that the class number of $k'$ is odd.   Let $\Delta$ denote the multiplicative group  such that ${k^*}^2 \subset {\Delta} \subset k^*$ and $k(\sqrt{\Delta})$ is the Hilbert genus field of $k$  $($\textnormal{cf}.  \S \ref{sec:1}$)$. Then 
 	$$r_2(\Delta/{k^*}^2)=t-e-1,$$
 	where $t$ and $e$ are defined in Lemma \ref{ambiguous class number formula}.
 \end{lemma}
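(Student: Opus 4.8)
The plan is to string together three ingredients: the Kummer-theoretic description of $E(k)$ recalled in Section~\ref{sec:1}, the elementary self-duality of finite $\mathbb{F}_2$-vector spaces, and the ambiguous class number formula (Lemma~\ref{ambiguous class number formula}).

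First I would recall from the introduction that $E(k)/k$ is a finite abelian extension of exponent $2$ with $\mathbf{C}l(k)/\mathbf{C}l(k)^2 \simeq \mathrm{Gal}(E(k)/k)$; in particular $r_2(\mathbf{C}l(k)) = r_2(\mathrm{Gal}(E(k)/k))$, and the group $\Delta/{k^*}^2$ is finite, since $[E(k):k] = [\Delta:{k^*}^2] < \infty$ (because $E(k)\subseteq H(k)$ and $H(k)/k$ is finite). Next, as $\mu_2 \subset k$, Kummer theory (cf. \cite[p.~14]{ref16}) furnishes a perfect pairing
$$
\mathrm{Gal}(k(\sqrt{\Delta})/k) \times \Delta/{k^*}^2 \longrightarrow \mu_2,
\qquad (\sigma,\overline{x}) \longmapsto \sigma(\sqrt{x})\,/\,\sqrt{x},
$$
identifying $\Delta/{k^*}^2$ with the character group of $\mathrm{Gal}(E(k)/k)$. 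Since a finite elementary abelian $2$-group is (non-canonically) isomorphic to its character group, this yields $r_2(\Delta/{k^*}^2) = r_2(\mathrm{Gal}(E(k)/k)) = r_2(\mathbf{C}l(k))$.

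Finally, because the class number of $k'$ is odd, I would apply Lemma~\ref{ambiguous class number formula} directly to the quadratic extension $k/k'$ to obtain $r_2(\mathbf{C}l(k)) = t - 1 - e$, with $t$ and $e$ exactly as defined there. Substituting this into the previous identity gives $r_2(\Delta/{k^*}^2) = t - e - 1$, which is the assertion.

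I do not anticipate a serious obstacle here: the argument is essentially a bookkeeping of canonical isomorphisms. The one point deserving a line of justification is the finiteness of $\Delta/{k^*}^2$ and the fact that the $2$-rank is transported faithfully across the Kummer pairing (via the self-duality of finite $\mathbb{F}_2$-vector spaces); both are immediate once one observes that $E(k)$ sits inside the finite extension $H(k)/k$.
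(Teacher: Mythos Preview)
Your proof is correct and follows essentially the same approach as the paper's own argument: both establish $r_2(\Delta/{k^*}^2)=r_2(\mathbf{C}l(k))$ via Kummer theory and the identification $\mathbf{C}l(k)/\mathbf{C}l(k)^2\simeq\mathrm{Gal}(E(k)/k)$, and then invoke Lemma~\ref{ambiguous class number formula}. The only cosmetic difference is that the paper reads off the equality of $2$-ranks directly from the degree identity $[k(\sqrt{\Delta}):k]=|\Delta/{k^*}^2|$, whereas you phrase it through the Kummer pairing and self-duality of finite $\mathbb{F}_2$-vector spaces; these are equivalent.
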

 \begin{proof}Put  $ \mathrm{G}=\mathrm{Gal}(H(k)/k)$.
 	By the definition of $E(k) $   $($\textnormal{cf}.  \S \ref{sec:1}$)$ we have: $$\mathbf{C}l(k)/\mathbf{C}l(k)^2   \simeq \mathrm{G}/\mathrm{G}^2 \simeq \mathrm{Gal}(E(k)/k),$$
 	Since $E(k)=k(\sqrt{\Delta})$, then by class field theory:
 	$$r_2(\Delta/{k^*}^2)= \log _2[k(\sqrt{\Delta}): k]=\log _2[E(k):k]=r_2(\mathbf{C}l(k)).$$
 	So the result by the ambiguous class number formula (Lemma \ref{ambiguous class number formula}).
 \end{proof}
 
 \begin{lemma}[\cite{ref13}]
 	Let     $p\equiv 1\pmod 4$  be a prime number and $\varepsilon_p$ is the fundamental unit of $\mathbb{Q}(\sqrt{p})$. Then, there are two natural integers u and v such that:$$\varepsilon_p^\lambda = u+v\sqrt{p},\hspace{1cm}  u\equiv 0\pmod 2, \hspace{1cm} v\equiv 1\pmod 4,$$
 	where $\lambda=\begin{cases}1,& \text{ if } p\equiv 1\pmod 8;\\
 		3, & \text{ if }  p\equiv 5\pmod 8.\end{cases}$
 \end{lemma}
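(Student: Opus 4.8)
The plan is to reduce the statement to a Pell‑type equation for the coefficients of $\varepsilon_p^{\lambda}$ and then extract the two congruences, the first by reduction modulo $4$ and the second by an elementary prime‑factorization argument.

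First I would recall the classical fact that for a prime $p\equiv 1\pmod 4$ one has $N_{k_0/\mathbb{Q}}(\varepsilon_p)=-1$ (equivalently, $x^{2}-py^{2}=-1$ is solvable in integers). The next point is to justify the choice of $\lambda$, i.e.\ that $\varepsilon_p^{\lambda}\in\mathbb{Z}[\sqrt p]$. Since $\mathbb{Z}[\sqrt p]=\mathbb{Z}+2\mathcal{O}_{k_0}$, this amounts to showing that the image of $\varepsilon_p^{\lambda}$ in $(\mathcal{O}_{k_0}/2\mathcal{O}_{k_0})^{\times}$ is trivial. When $p\equiv 1\pmod 8$ the prime $2$ splits in $k_0$, so this unit group is trivial and already $\varepsilon_p\in\mathbb{Z}[\sqrt p]$, which covers $\lambda=1$; when $p\equiv 5\pmod 8$ the prime $2$ is inert, the group is cyclic of order $3$, and hence $\varepsilon_p^{3}\in\mathbb{Z}[\sqrt p]$, which covers $\lambda=3$. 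Alternatively, writing $\varepsilon_p=\tfrac{a+b\sqrt p}{2}$ with $a\equiv b\pmod 2$ and $a^{2}-pb^{2}=-4$, a reduction modulo $8$ shows that $a,b$ can both be odd only when $p\equiv 5\pmod 8$, and in that case the expansion $\varepsilon_p^{3}=\tfrac{a(a^{2}+3pb^{2})+b(3a^{2}+pb^{2})\sqrt p}{8}$ together with $a^{2}\equiv b^{2}\equiv 1\pmod 8$ gives $8\mid a^{2}+3pb^{2}$ and $8\mid 3a^{2}+pb^{2}$, so $\varepsilon_p^{3}\in\mathbb{Z}[\sqrt p]$ in all cases.

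Now write $\varepsilon_p^{\lambda}=u+v\sqrt p$ with $u,v\in\mathbb{Z}$. Since $\lambda$ is odd, $N_{k_0/\mathbb{Q}}(\varepsilon_p^{\lambda})=(-1)^{\lambda}=-1$, so the conjugate of $\varepsilon_p^{\lambda}$ equals $-(\varepsilon_p^{\lambda})^{-1}\in(-1,0)$; combining with $\varepsilon_p^{\lambda}>1$ gives $u=\tfrac12(\varepsilon_p^{\lambda}-\varepsilon_p^{-\lambda})>0$ and $v\sqrt p=\tfrac12(\varepsilon_p^{\lambda}+\varepsilon_p^{-\lambda})>0$, so $u$ and $v$ are indeed natural integers, and the norm relation reads
$$u^{2}+1=pv^{2}.$$
Reducing modulo $4$: if $u$ were odd then $u^{2}+1\equiv 2\pmod 4$, which is impossible because $pv^{2}\equiv v^{2}\in\{0,1\}\pmod 4$; hence $u\equiv 0\pmod 2$, and then $pv^{2}\equiv 1\pmod 4$ forces $v$ to be odd. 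Finally, for $v\equiv 1\pmod 4$ I would argue by prime factorization: every prime $\ell$ dividing $v$ is odd (as $v$ is odd) and satisfies $\ell\mid u^{2}+1$ by the displayed relation, so $-1$ is a quadratic residue modulo $\ell$, whence $\ell\equiv 1\pmod 4$; since $v$ is a product of such primes, $v\equiv 1\pmod 4$.

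All the computations here are short, so there is no serious obstacle; the two ingredients that actually carry the argument are the classical determination of the sign of $N_{k_0/\mathbb{Q}}(\varepsilon_p)$ together with the index computation fixing $\lambda$, and the observation that every divisor of $v$ is $\equiv 1\pmod 4$ — this last point is what upgrades the purely $2$‑adic information ``$v$ is odd'' to the sharp congruence ``$v\equiv 1\pmod 4$''. If one prefers to avoid the split/inert dichotomy for $2$, the only mildly delicate piece is the modulo‑$8$ bookkeeping in the expansion of $\varepsilon_p^{3}$.
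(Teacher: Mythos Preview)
Your argument is correct. The paper does not give its own proof of this lemma; it simply cites Williams \cite{ref13} and moves on. Your proposal therefore supplies a complete, self-contained proof where the paper offers none. The two substantive steps --- the index computation $[\mathcal{O}_{k_0}^\times : (\mathbb{Z}[\sqrt p])^\times]$ via the structure of $(\mathcal{O}_{k_0}/2)^\times$ to justify $\varepsilon_p^\lambda\in\mathbb{Z}[\sqrt p]$, and the observation that every prime divisor of $v$ divides $u^2+1$ and hence is $\equiv 1\pmod 4$ --- are both sound, and the positivity of $u,v$ is handled cleanly from $\varepsilon_p^\lambda>1$ and $N(\varepsilon_p^\lambda)=-1$. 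There is nothing to correct.
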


 \begin{lemma}\label{lemma 4.3}
 	Let $p$ and $q$ be two different prime numbers such that $p \equiv 1\pmod 4$, $q \equiv \pm 1 \pmod 4$ and $\left(\dfrac{p}{q}\right)=1$. Then, there exist two natural integers $x$ and $y$ such that: $$ x^2-py^2=q^{\lambda h},$$
 	where $h$ is the class number of  $\mathbb{Q}(\sqrt{p})$ and  $\lambda=\begin{cases}1,& \text{ if } p\equiv 1\pmod 8;\\
 		3, & \text{ if }  p\equiv 5\pmod 8.\end{cases}$\\
 	Furthermore,  we have:$$\begin{cases} x\equiv 1\pmod 2, \hspace{0.5cm} y\equiv 0\pmod 2, \text{ if }  q\equiv 1\pmod 4;\\
 		x\equiv 0\pmod 2, \hspace{0.5cm} y\equiv 1\pmod 2, \text{ if } q\equiv 3\pmod 4.\end{cases}$$
 \end{lemma}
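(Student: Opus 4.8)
The plan is to realise $x+y\sqrt p$ as a suitable power of a generator of a power of a prime ideal of $k_0=\mathbb Q(\sqrt p)$ lying above $q$, and then to read the parity statement off the resulting norm equation. First, since $p\equiv 1\pmod 4$, quadratic reciprocity gives $\left(\frac{q}{p}\right)=\left(\frac{p}{q}\right)=1$, so $q$ splits in $k_0$, say $q\mathcal O_{k_0}=\mathfrak q\,\overline{\mathfrak q}$ with $\mathfrak q\neq\overline{\mathfrak q}$ and $N_{k_0/\mathbb Q}(\mathfrak q)=q$. Since $p$ is the only prime ramifying in $k_0/\mathbb Q$, Lemma \ref{ambiguous class number formula} applied to $k_0/\mathbb Q$ forces $h=h(k_0)$ to be odd; in particular $\mathfrak q^{h}$ is principal, say $\mathfrak q^{h}=(\pi)$ with $\pi\in\mathcal O_{k_0}$ and $N_{k_0/\mathbb Q}(\pi)=\pm q^{h}$.

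Next I would fix the sign of the norm and descend into the order $\mathbb Z[\sqrt p]$. By the preceding lemma $\varepsilon_p^{\lambda}=u+v\sqrt p$ with $u$ even and $v\equiv 1\pmod 4$; since $\lambda$ is odd this gives $N(\varepsilon_p^{\lambda})=u^{2}-pv^{2}\equiv -1\pmod 4$, hence $N(\varepsilon_p)=-1$. Replacing $\pi$ by $\pi\varepsilon_p$ if necessary, we may therefore assume $N_{k_0/\mathbb Q}(\pi)=q^{h}$, which is odd, so $\pi$ is coprime to $2$ in $\mathcal O_{k_0}$. Now $2\mathcal O_{k_0}\subseteq\mathbb Z[\sqrt p]$, and $\mathcal O_{k_0}/2\mathcal O_{k_0}$ is isomorphic to $\mathbb F_2\times\mathbb F_2$ when $p\equiv 1\pmod 8$ (so $\lambda=1$) and to $\mathbb F_4$ when $p\equiv 5\pmod 8$ (so $\lambda=3$); in either case the unit group of $\mathcal O_{k_0}/2\mathcal O_{k_0}$ has order dividing $\lambda$, whence $\pi^{\lambda}\equiv 1\pmod{2\mathcal O_{k_0}}$ and therefore $\alpha:=\pi^{\lambda}\in\mathbb Z[\sqrt p]$ (one may also simply expand $(x_{0}+y_{0}\sqrt p)^{\lambda}$ and check the $2$-divisibilities by hand). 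Write $\alpha=x+y\sqrt p$ with $x,y\in\mathbb Z$; then $x^{2}-py^{2}=N(\alpha)=N(\pi)^{\lambda}=q^{\lambda h}$. Since $q^{\lambda h}>0$, $\alpha$ and $\overline\alpha$ have the same sign, so after replacing $\alpha$ by $-\alpha$ and then by $\overline\alpha$ if necessary we may take $x\geq 0$ and $y\geq 0$; moreover $y\neq 0$, because $\mathfrak q^{\lambda h}$ and $\overline{\mathfrak q}^{\lambda h}$ are distinct and neither is generated by a rational integer. Thus $x$ and $y$ are the desired natural integers.

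It then remains to check the parities, and these are forced by the equation $x^{2}-py^{2}=q^{\lambda h}$ itself. As $q$ is odd, $x$ and $y$ cannot both be even; nor can they both be odd, for then $x^{2}-py^{2}\equiv 1-p\equiv 0\pmod 4$ would be even. Hence exactly one of $x,y$ is even, and modulo $4$ one has $x^{2}-py^{2}\equiv 1$ if $x$ is odd and $y$ even, while $x^{2}-py^{2}\equiv -p\equiv 3$ if $x$ is even and $y$ odd. Since $\lambda h$ is odd, $q^{\lambda h}\equiv q\pmod 4$, so the first alternative holds exactly when $q\equiv 1\pmod 4$ and the second exactly when $q\equiv 3\pmod 4$, which is the asserted dichotomy.

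The step I expect to be the main obstacle is the descent in the second paragraph: one must be sure that the correct power $\lambda\in\{1,3\}$ of the generator $\pi$ lands inside $\mathbb Z[\sqrt p]$, so that its norm form is exactly $x^{2}-py^{2}$ rather than $\tfrac14(x^{2}-py^{2})$. This is precisely where the splitting type of $2$ in $k_0$ --- equivalently the residue of $p$ modulo $8$ --- enters and pins down $\lambda$; the remaining steps are routine manipulations with norms and congruences modulo $4$.
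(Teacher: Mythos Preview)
Your argument is correct and follows the same route as the paper: split $q$ in $k_0$, raise a prime above $q$ to a suitable power to obtain a principal ideal, take norms, and read off the parities from $x^{2}-py^{2}\equiv q\pmod 4$. The paper simply asserts that one can choose natural integers $x,y$ with $\mathcal H_1^{\lambda h}=(x+y\sqrt p)$ and $x^{2}-py^{2}>0$, and then argues exactly as you do for the congruences.

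The one genuine difference is your second paragraph: you actually justify why the exponent $\lambda$ is needed, namely to force the generator into the suborder $\mathbb Z[\sqrt p]\subset\mathcal O_{k_0}$ via the structure of $(\mathcal O_{k_0}/2\mathcal O_{k_0})^{\times}$. The paper leaves this step implicit, so your version is more complete on precisely the point you flagged as the main obstacle. Your use of $N(\varepsilon_p)=-1$ to normalise the sign of $N(\pi)$, and the final sign adjustment $\alpha\mapsto\pm\alpha,\overline\alpha$ to get $x,y\ge 0$, are also spelled out more explicitly than in the paper but are in the same spirit.
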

 \begin{proof} 
 	Since $(\frac{p}{q})=1$, then   $q$ splits in $k_0=\mathbb{Q}(\sqrt{p})$. Thus, there exist two prime ideals $\mathcal{H}_1$ and $\mathcal{H}_2$ of $\mathcal{O}_{k_0}$, such that $q\mathcal{O}_{k_0}=\mathcal{H}_1\mathcal{H}_2$ and $\sigma (\mathcal{H}_1)=\mathcal{H}_2$ ($\sigma$ is the generator of Galois group  of $k_0$). Thus, 
 	$q^h\mathcal{O}_{k_0}=\mathcal{H}_1^{h}\mathcal{H}_2^{h}$, where $h$ is the class number of $k_0$. Since $\mathcal{H}_1^{h}$ and $\mathcal{H}_2^{h}$ are two  principal ideals,  we can choose two natural integers $x$ and $y$ such that $\mathcal{H}_1^{\lambda h}=(x+\sqrt{p}y)$,  $\mathcal{H}_2^{\lambda h}=(x-\sqrt{p}y)$ and $x^2-py^2> 0$. Then $q^{\lambda h}\mathcal{O}_{k_0}=(x^2-py^2)$.
 	Therefore, $x^2-py^2=\eta q^{\lambda h}$, for a certain unit $\eta \in E_{k_0}$. Thus  $\eta \in E_{k_0} \cap \mathbb{Q}$. Since $E_{k_0}=\langle-1,\;\varepsilon_p \rangle$ and $x^2-py^2> 0$, it follows that $\eta =1$ and so $x^2-py^2= q^{\lambda h}$, which gives the first part of the lemma. 
 	If we assume that, $q\equiv 1\pmod 4$, we get $x^2-y^2 \equiv 1\pmod 4$.  
 	By checking all the possibilities of $x^2-y^2\pmod{4}$  we deduce that, $x \equiv 1\pmod 2$ and $y \equiv 0\pmod 2$. If  $q \equiv 3\pmod 4$, we get $x^2-y^2 \equiv 3\pmod 4$.
 	Hence, $x \equiv 0\pmod 2$ and $y \equiv 1\pmod 2$. Which completes the proof.
 \end{proof}
 
 \begin{lemma}\label{lemma 4.4}
 	Let $k=k'(\sqrt{\mu})$ be a quadratic extension of number fields  and $\beta \in k'$. Then,
 	$\beta \in {k}^2 \text{ if  and only  if }\beta \in {k'}^2\text{ or }\mu\beta \in {k'}^2.$
 \end{lemma}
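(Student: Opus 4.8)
The plan is to prove the two implications separately; both are elementary and the only structural input is that $\{1,\sqrt{\mu}\}$ is a $k'$-basis of $k$, which is precisely the statement that $[k:k']=2$ together with $\sqrt{\mu}\notin k'$ (so that $1$ and $\sqrt{\mu}$ are $k'$-linearly independent).

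First I would dispose of the reverse implication. If $\beta\in{k'}^2$, say $\beta=\gamma^2$ with $\gamma\in k'\subseteq k$, then trivially $\beta\in{k}^2$. If instead $\mu\beta\in{k'}^2$, say $\mu\beta=\gamma^2$ with $\gamma\in k'$, then (taking $\beta=0$ as trivial) we have $\gamma\neq 0$ and, since $\sqrt{\mu}\in k$, we may write $\beta=\gamma^2/\mu=(\gamma/\sqrt{\mu})^2\in{k}^2$. So in either case $\beta\in{k}^2$.

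For the forward implication, assume $\beta=\alpha^2$ for some $\alpha\in k$ and write $\alpha=x+y\sqrt{\mu}$ with $x,y\in k'$. Squaring gives $\beta=(x^2+\mu y^2)+2xy\sqrt{\mu}$. Since $\beta\in k'$ and $\{1,\sqrt{\mu}\}$ is a $k'$-basis of $k$, comparing the $\sqrt{\mu}$-components forces $2xy=0$, hence $x=0$ or $y=0$. If $y=0$ then $\beta=x^2\in{k'}^2$; if $x=0$ then $\beta=\mu y^2$, so $\mu\beta=\mu^2 y^2=(\mu y)^2\in{k'}^2$. This is the desired dichotomy, and it completes the proof.

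I do not expect any genuine obstacle here. The only points deserving a word of care are the degenerate case $\beta=0$ (where $\beta=0^2\in{k'}^2$ and there is nothing to check), and the legitimacy of dividing by $\mu$ inside $k$, which is justified by $1/\sqrt{\mu}\in k$.
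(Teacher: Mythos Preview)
Your proof is correct and follows essentially the same approach as the paper: write a square root of $\beta$ as $x+y\sqrt{\mu}$, expand, and use the $k'$-linear independence of $1$ and $\sqrt{\mu}$ to force $xy=0$. The paper simply declares the reverse implication ``evident'' while you spell it out, but the argument is the same.
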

 \begin{proof}
 	If $\beta \in k^2$, then $\beta = {(a+b\sqrt{\mu})}^2$, for some $a$, $b \in {k'}$, so $ \beta =a^2+\mu b^2+2ab\sqrt{\mu}$. Since $\beta \in {k'}$, we have $a=0$ or $b=0$. If $b=0$, then $\beta =a^2 \in {k'}^2$. If $a=0$, then $\beta= \mu b^2$.  Therefore $\mu\beta={(\mu b)}^2\in {k'}^2$. The reciprocal implication is evident.
 \end{proof}
 
 Now, after all the above preparations,  we can state our first main theorem. We keep the notations in the beginning of the previous section and  we shall put $E=E(K)$.

 \begin{theorem}\label{first main theorem}
 	Let $p\equiv 5\pmod 8$ be a prime number and denote by $E$ the Hilbert genus field of  $K=\mathbb{Q}(\sqrt{-a{\varepsilon}_p \sqrt{p}})$. We have:
 	\begin{enumerate}[\indent\rm 1.]
 		\item If $a\equiv 1\pmod 4$, then:
 		$$E=K(\sqrt{{q_1}^*}, \sqrt{{q_2}^*}, \dots, \sqrt{{q_n}^*}, \sqrt{{\alpha_1}^*}, \sqrt{{\alpha_2}^*}, \dots, \sqrt{{\alpha_m}^*});$$
 		\item If $a\equiv 3\pmod 4$, then:
 		$$E=K(\sqrt{-1}, \sqrt{{q_1}^*}, \sqrt{{q_2}^*}, \dots, \sqrt{{q_n}^*}, \sqrt{{\alpha_1}^*},\sqrt{{\alpha_2}^*},\dots, \sqrt{{\alpha_m}^*});$$
 		\item If $a=2\displaystyle\prod_{i=1}^n q_i$ and $\displaystyle\prod_{i=1}^n q_i\equiv 1\pmod 4$, then:
 		
 		$$ E=K(\sqrt{2}, \sqrt{{q_1}^*}, \sqrt{{q_2}^*}, \dots,\sqrt{{q_n}^*},\sqrt{{\alpha_1}^*},\sqrt{{\alpha_2}^*}, \dots, \sqrt{{\alpha_m}^*});$$
 		
 		\item  If  $a=2\displaystyle\prod_{i=1}^n q_i$ and $\displaystyle\prod_{i=1}^n q_i\equiv 3\pmod 4$, then: $$ E=K(\sqrt{-2}, \sqrt{{q_1}^*}, \sqrt{{q_2}^*}, \dots, \sqrt{{q_n}^*}, \sqrt{{\alpha_1}^*},\sqrt{{\alpha_2}^*},\dots, \sqrt{{\alpha_m}^*});$$
 	\end{enumerate}
 	where:
 	$$ \left\{\begin{array}{lll}
 		{q_i}^*=(-1)^{\frac{q_i-1}{2}}q_i,\quad (1 \leqslant i \leqslant n),\\
 		\alpha_j=x_j+y_j\sqrt{p},\text{  }(1 \leqslant j \leqslant m),\; x_j \text{ and } y_j \text{ are the integers  given by Lemma \ref{lemma 4.3},  }\\
 		\quad\qquad \qquad \qquad  \qquad \qquad \qquad \text{such that } x_j^2-py_j^2=q_j^{\lambda h} ,\\
 		\alpha_j^*=\begin{cases}(-1)^{\frac{x_j+y_j-1}{2}}\alpha_j, \hspace{0.5cm}\text{ if }  q_j\equiv 1\pmod 4,\\
 			(-1)^{\frac{x_j+y_j-1}{2}}\alpha_j \sqrt{p},\hspace{0.5cm} \text{ if } q_j\equiv 3\pmod 4,\end{cases} \quad (1\leqslant j \leqslant m). 
 	\end{array}\right. $$
 \end{theorem}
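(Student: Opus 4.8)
\textit{Overview.} The plan is to identify the Kummer group $\Delta$ with $E=K(\sqrt{\Delta})$ (notation of \S\ref{sec:1}) by an ambiguous‑class‑number count together with an explicit verification, carrying out case~1 ($a\equiv 1\pmod 4$) in detail and indicating how cases~2--4 differ. The starting point is that $k_0=\mathbb{Q}(\sqrt p)$ has odd class number (a prime $p\equiv 1\pmod 4$ gives $2$-rank $0$ by genus theory), so Lemma \ref{ambiguous class number formula} and Lemma \ref{lemma 4.1} apply to the quadratic extension $K/k_0$, and $r_2(\Delta/{K^*}^2)=r_2(\mathbf{C}l(K))=t-e-1$.

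\textit{The rank.} Since $K$ is a CM field with maximal real subfield $k_0$ and $N_{k_0/\mathbb{Q}}(\varepsilon_p)=-1$, every norm $N_{K/k_0}(z)$ is totally positive, so $E_{k_0}\cap N_{K/k_0}(K^*)$ is contained in the group of totally positive units of $k_0$, which is $\langle\varepsilon_p^2\rangle=E_{k_0}^{2}$; as the reverse inclusion $E_{k_0}^{2}\subseteq N_{K/k_0}(K^*)$ is trivial, we get $2^{e}=[E_{k_0}:E_{k_0}^{2}]=4$, i.e. $e=2$. For $t$, reading off $\delta=-a\varepsilon_p\sqrt p$ (equivalently Lemma \ref{lemma 2.1; discriminants}): the finite primes of $k_0$ ramifying in $K$ are the $2m$ primes above the split primes $q_1,\dots,q_m$, the $n-m$ inert primes $q_{m+1},\dots,q_n$, and the prime $(\sqrt p)$ over $p$; together with the two infinite places of $k_0$ (which ramify because $K$ is totally imaginary) this gives $t=n+m+3$, hence $r_2(\Delta/{K^*}^2)=n+m$.

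\textit{The proposed generators lie in $\Delta$.} An element $g\in K^*$ belongs to $\Delta$ precisely when $K(\sqrt g)/K$ is unramified at all finite and infinite primes; the infinite ones are automatic since $K$ is totally imaginary. Each $K(\sqrt{q_i^*})$ sits inside the genus field $K^{(*)}$ of Proposition \ref{prop 3.1}, which is unramified over $K$ by construction, so $q_i^*\in\Delta$; in cases~2--4 the same remark covers the extra generator $\sqrt{-1}$, $\sqrt 2$ or $\sqrt{-2}$. For $\alpha_j^*$ I would apply Proposition \ref{prop 2.7} to $K(\sqrt{\alpha_j^*})/K$: part~(1) holds because $(\alpha_j^*)\mathcal{O}_{k_0}=\mathcal{H}_j^{\lambda h}$ (times $(\sqrt p)$ when $q_j\equiv 3\pmod 4$), where $\mathcal{H}_j$ is a prime above $q_j$, and since $q_j\,\|\,a$ and $(\sqrt p)\mid\delta$ both $\mathcal{H}_j$ and $(\sqrt p)$ ramify in $K$, so $(\alpha_j^*)\mathcal{O}_K$ is the square of a fractional ideal; part~(2), namely $\alpha_j^*\equiv\xi^2\pmod 4$ for some $\xi$, reduces to a computation in $\mathcal{O}_{k_0}=\mathbb{Z}\!\left[\tfrac{1+\sqrt p}{2}\right]$: writing $\alpha_j=x_j+y_j\sqrt p$ and using the parities of $x_j,y_j$ from Lemma \ref{lemma 4.3} together with the sign $(-1)^{(x_j+y_j-1)/2}$, one checks $\alpha_j^*\equiv 1\pmod{4\mathcal{O}_{k_0}}$ in every subcase, so one may take $\xi=1$. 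This verification is the step I expect to require the most care, as it forces a split according to $x_j,y_j\bmod 4$.

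\textit{Independence and conclusion.} By Lemma \ref{lemma 4.4}, a product $\beta=\prod_{i\in S}q_i^{*}\prod_{j\in T}\alpha_j^{*}$ with $(S,T)\neq(\emptyset,\emptyset)$ lies in ${K^*}^2$ iff $\beta$ or $\delta\beta$ lies in ${k_0^*}^2$; computing valuations at $\mathcal{H}_j$, at its conjugate, and at $(\sqrt p)$, and using that $\lambda h$ is odd, one always locates an odd valuation, so $\beta\notin{K^*}^2$. Hence $q_1^{*},\dots,q_n^{*},\alpha_1^{*},\dots,\alpha_m^{*}$ span a subspace of $\Delta/{K^*}^2$ of $\mathbb{F}_2$-dimension $n+m$, which by the second paragraph is all of $\Delta/{K^*}^2$. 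Therefore $E=K(\sqrt{\Delta})=K(\sqrt{q_1^{*}},\dots,\sqrt{q_n^{*}},\sqrt{\alpha_1^{*}},\dots,\sqrt{\alpha_m^{*}})$, and cases~2--4 follow in the same way: there the prime above $2$ ramifies in $K$ (as $a\not\equiv 1\pmod 4$), so $t=n+m+4$, $r_2(\Delta/{K^*}^2)=n+m+1$, and the one extra generator of $K^{(*)}$ accounts for the difference, giving the stated forms.
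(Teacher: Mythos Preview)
Your proposal is correct and follows the same overall architecture as the paper: compute $r_2(\Delta/{K^*}^2)$ via the ambiguous class number formula for $K/k_0$, exhibit the candidate generators, check they lie in $\Delta$ by combining Proposition~\ref{prop 3.1} (for the $q_i^*$ and the extra $\pm 1,\pm 2$) with Proposition~\ref{prop 2.7} (for the $\alpha_j^*$), verify independence modulo ${K^*}^2$, and conclude by a dimension count.

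Two of your sub-arguments differ from the paper's and are worth flagging. First, for $e=2$ the paper computes three Hilbert symbols at the archimedean places to exclude $-1,\varepsilon_p,-\varepsilon_p$ from the local norm groups, whereas you use the CM structure: every $N_{K/k_0}(z)$ is totally positive, and since $N_{k_0/\mathbb{Q}}(\varepsilon_p)=-1$ the totally positive units are exactly $E_{k_0}^2$. This is shorter and avoids any local computation. Second, for independence the paper pushes a hypothetical relation down to $\mathbb{Q}$ via $N_{k_0/\mathbb{Q}}$ and reads off a contradiction among distinct primes, while you argue with valuations at $\mathcal{H}_j$, $\bar{\mathcal{H}}_j$ and $(\sqrt p)$; the key point that makes your version work is that $v_{\mathcal{H}_j}(\beta)-v_{\bar{\mathcal{H}}_j}(\beta)=\lambda h$ is odd whenever $j\in T$, and this difference is unaffected by multiplying by $\delta$. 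Both routes are straightforward; yours is a bit more uniform across the subcases.

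The one place you explicitly leave as ``requires the most care'' --- the congruence $\alpha_j^*\equiv 1\pmod{4\mathcal{O}_{k_0}}$ --- is exactly what the paper verifies (for $q_j\equiv 1\pmod 4$, with the $q_j\equiv 3\pmod 4$ case declared analogous); the computation you sketch, using $\sqrt p\equiv 1\pmod{2\mathcal{O}_{k_0}}$ and the parities from Lemma~\ref{lemma 4.3}, goes through in both subcases.
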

 
 \begin{proof}Note that $K/k_0$ is a quadratic extension with the class number of $k_0=\mathbb{Q}(\sqrt{p})$ is odd. So we are in the conditions of Lemma \ref{lemma 4.1}.
 	Let $\Delta$ be the multiplicative group  such that   $E=K(\sqrt{\Delta})$. Thus  $r_2(\Delta/{K^*}^2)=t-e-1$. Let    $p_{\infty}$ and $\bar p_{\infty}$ denote the  infinite primes of $k_0$ which are respectively corresponding to the $\mathbb{Q}$-embeddings:
 	
 	$$
 	\begin{array}{rcl}
 		i_{p_{\infty}}:  k_0  &\hookrightarrow &\mathbb{R} \\
 		\sqrt{p} &\longmapsto &-\sqrt{p}
 	\end{array} \hspace{1cm} \text{ and } \hspace{1cm}
 	\begin{array}{rcl}
 		i_{\bar{p}_{\infty}}:  k_0  &\hookrightarrow & \mathbb{R} \\
 		\sqrt{p} &\longmapsto & \sqrt{p}
 	\end{array}
 	$$ 
 	Set $i_{{p}_{\infty}}(\varepsilon_{p})=\bar{\varepsilon}_p$. Note that  $N_{k_0/\mathbb{Q}}(\varepsilon_p)=\varepsilon_p \bar{\varepsilon}_p=-1$.
 	By the definition of Hilbert symbol (cf. \cite{ref5}),   we have: 
 	$$
 	\begin{array}{rcl}
 		\left(\frac{-1,-a\varepsilon_{p}\sqrt{p} }{p_{\infty}}\right)
 		&=&i_{p_{\infty}}^{-1}(( i_{p_{\infty}}( -1), i_{p_{\infty}}(-a\varepsilon_{p}\sqrt{p}) ) _{p_{\infty}})\\
 		&=& i_{p_{\infty}}^{-1}(( -1, a\bar{\varepsilon}_{p}\sqrt{p} ) _{p_{\infty}})\\
 		&=& i_{p_{\infty}}^{-1}(-1)\\
 		&=& -1,
 	\end{array}
 	$$
 	$$
 	\begin{array}{rcl}
 		\left(\frac{\varepsilon_{p},-a\varepsilon_{p}\sqrt{p} }{{p}_{\infty}}\right)
 		&=&i_{{p}_{\infty}}^{-1}(( i_{{p}_{\infty}}(\varepsilon_{p}), i_{{p}_{\infty}}(-a\varepsilon_{p}\sqrt{p}) ) _{{p}_{\infty}})\\
 		&=& i_{{p}_{\infty}}^{-1}(( \bar{\varepsilon}_{p}, a \bar{\varepsilon}_{p}\sqrt{p} ) _{{p}_{\infty}})\\
 		&=& i_{{p}_{\infty}}^{-1}(-1)\\
 		&=& -1,
 	\end{array}
 	$$  
 	$$
 	\begin{array}{rcl}
 		\left(\frac{-\varepsilon_{p},-a\varepsilon_{p}\sqrt{p} }{\bar{p}_{\infty}}\right)
 		&=&i_{\bar{p}_{\infty}}^{-1}(( i_{\bar{p}_{\infty}}(-\varepsilon_{p}), i_{\bar{p}_{\infty}}(-a\varepsilon_{p}\sqrt{p}) ) _{\bar{p}_{\infty}})\\
 		&=& i_{\bar{p}_{\infty}}^{-1}((-\varepsilon_{p}, -a\varepsilon_{p}\sqrt{p} ) _{\bar{p}_{\infty}})\\
 		&=& i_{\bar{p}_{\infty}}^{-1}(-1)\\
 		&=& -1.
 	\end{array}
 	$$
 	It follows, by the Hasse norm theorem, that  $-1$, $\varepsilon_p$ and $-\varepsilon_p$ are not in $N_{K/k_0}(K^*)$. Thus, $e=2$. Therefore,  to compute $r_2(\Delta/{K^*}^2)$, it suffices to determine the number of   prime ideals of $k_0$ which ramify in $K$.
 	
 	\begin{enumerate}[\rm {Case }1.]
 		\item Assume that  $a\equiv 1\pmod 4$:\\
 		In this case, we have $\delta_{K/k_0}= a\sqrt{p}$ (Lemma \ref{lemma 2.1; discriminants}). Thus the finite primes   of $k_0$ which ramify   in $K$ are the prime divisors of $a$ in $k_0$ and the prime ideal $(\sqrt{p})$. 
 		Since $\left(\dfrac{p}{q_j}\right) =1$, for $1\leq j \leq m$, then $q_j$   splits in $k_0$. Thus, we have:
 		$$r_2(\Delta/{K^*}^2) = (n+m+1)+2-2-1=n+m.$$ 
 		On the other hand, to  explicitly determine $E$, it suffices  to determine the set of generators for the finite group $(\Delta/{K^*}^2)$. For this, we consider the set:
 		$$\mathbb{B}=\{q_1^*,q_2^*,\dots,q_n^*,\alpha_1^*,\alpha_2^*,\dots,\alpha_m^*\}.$$ 
 		Let us  show that the elements of $\mathbb{B}$ are linearly independent   modulo ${K^*}^2$ (provided that the notion
 		of linear independence is translated to a multiplicative setting: $\alpha_1,..., \alpha_s$ are multiplicatively independent if $\alpha_1^{m_1}... \alpha_s^{m_s}= 1$ implies that
 		$m_i=0$, for all $i$).

 		We consider the element $\beta =\left(\displaystyle\prod_{i=1}^n {q_i^*}^{a_i}\right)\left(\displaystyle\prod_{j=1}^m {\alpha_j^*}^{b_j}\right)$, where $a_i, b_j \in \{0,1\}$ and are not all zero. Suppose that $\beta \in {K^*}^2 .$\\
 		$\bullet$ If we assume that $\forall \, j \in \{1,2,\dots,m\}\;\,b_j=0$, we get  $\beta =\left(\displaystyle\prod_{i=1}^n {q_i^*}^{a_i}\right)$ $\in {K^*}^2$ which implies $\sqrt{\beta} \in K^*$. Thus, $\mathbb{Q}(\sqrt{\beta})$ is a quadratic subfield of $ K $, which is impossible, because $ K $ has only one quadratic subfield that is $k_0=\mathbb{Q}(\sqrt{p})$.\\
 		$\bullet$ Assume that  $\exists \, j \in \{1,\dots,m\}$ with $b_j\neq 0$. Note that $\beta \in k_{0}^*$ and $\beta\in {K^*}^2$. So by Lemma \ref{lemma 4.4}, we have $\beta \in {k_{0}^*}^2 \text{ or }  \delta \beta \in {k_{0}^*}^2 \text{ where } \delta = -a\varepsilon_p \sqrt{p}$. We shall discuss each case.\\
 		Suppose that $\beta \in {k_{0}^*}^2.$ Then, we have $N_{k_0/\mathbb{Q}}(\beta)=\beta \beta^{\sigma} \in {\mathbb{Q}^*}^2$ $($with $\mathrm{Gal}(k_0/\mathbb{Q})$=$\{1,\sigma \}$$)$. Put $N=N_{k_0/\mathbb{Q}}$. We have $\forall \, i \in \{1,2,\dots,n\}$,  $N({q_i}^*)=q_i^2$ and  $\forall \, j \in \{1,2,\dots,m\}$: $$N({\alpha_j}) = x_j^2-py_j^2 =  q_j^{3h}  = q_jz_j^2, $$
 		where $z_j=(q_j)^{\frac{3h-1}{2}}$ (where $h$  is the class number of $k_0$, note that here $h$   is an odd number) and  $N(\sqrt{p})=-p$, so:
 		$$
 		N({\alpha_j}^*)=\begin{cases} q_jz_j^2, \hspace{1cm} \text{ if } q_j\equiv 1\pmod 4,\\
 			-pq_jz_j^2, \hspace{0.5cm}  \text{ if } q_j\equiv 3\pmod 4.\end{cases}
 		$$
 		Thus 
 		$$
 		N(\beta)=(-p)^l\left(\displaystyle\prod_{i=1}^n {q_i}^{a_i}\right)^2\left(\displaystyle\prod_{j=1}^m {q_j}^{b_j}\right)\left(\displaystyle\prod_{j=1}^m {z_j}^{b_j}\right)^2,
 		$$
 		where $l$ is the number of $q_j$ such that $q_j\equiv 3\pmod 4$.\\ 
 		If $l$ is even, $N(\beta)=\left(\displaystyle\prod_{j=1}^m {q_j}^{b_j}\right)Z^2 \in {{\mathbb{Q}}^*}^2$ where $Z\in \mathbb{Q}$, then $\left(\displaystyle\prod_{j=1}^m {q_j}^{b_j}\right)\in {{\mathbb{Q}}^*}^2$.\\
 		If $l$ is odd, $N(\beta)=-p\left(\displaystyle\prod_{j=1}^m {q_j}^{b_j}\right)Z^2 \in {{\mathbb{Q}}^*}^2$, so $-p\left(\displaystyle\prod_{j=1}^m {q_j}^{b_j}\right)\in {{\mathbb{Q}}^*}^2$. \\
 		Both cases are not possible because $ p, q_1, q_2, \dots,q_m$ are distinct prime numbers.\\
 		If now $\delta \beta \in {k_{0}^*}^2$,  then $N(\delta \beta)$ =$ N(\beta)N(\delta)\in  {{\mathbb{Q}}^*}^2 $. Since $N(\varepsilon_p)=-1$, we have  $N(\delta )=N(-a)N(\varepsilon_p)N(\sqrt{p})=pa^2$. Thus :
 		$$
 		N(\delta \beta)=pa^2(-p)^l\left(\displaystyle\prod_{i=1}^n {q_i}^{a_i}\right)^2\left(\displaystyle\prod_{j=1}^m {q_j}^{b_j}\right)\left(\displaystyle\prod_{j=1}^m {z_j}^{b_j}\right)^2,
 		$$
 		if $l$ is even, $N(\delta\beta)=p\left(\displaystyle\prod_{j=1}^m {q_j}^{b_j}\right)Y^2 \in {{\mathbb{Q}}^*}^2$, where $Y\in \mathbb{Q}$, then $p\displaystyle\prod_{j=1}^m {q_j}^{b_j}\in {{\mathbb{Q}}^*}^2,$
 		if $l$ is odd, $N(\delta\beta)=-\left(\displaystyle\prod_{j=1}^m {q_j}^{b_j}\right)Y^2 \in {{\mathbb{Q}}^*}^2$, then $-\displaystyle\prod_{j=1}^m {q_j}^{b_j}\in {{\mathbb{Q}}^*}^2,$
 		Both of these cases are impossible. Hence the elements of $\mathbb{B}$ are linearly  independent modulo ${K^*}^2$.

 		On the other hand, according to Proposition \ref{prop 3.1}, the genus field  of $K$ is $K^{(*)}=\left(\sqrt{{q_1}^*},\sqrt{{q_2}^*},\dots,\sqrt{{q_n}^*}\right)$. So for $1\leq i\leq n$,   $K(\sqrt{q_i^*})/K$ is an unramified extension, and by Lemma \ref{lemma 4.3}, for $1\leq j \leq m$,  $q_jO_k=\mathcal{H}_1^j\mathcal{H}_2^j$, where $\mathcal{H}_1^j$ and $\mathcal{H}_2^j$ are the prime ideals of $\mathcal{O}_{k_0}$ above $q_j$, since $\mathcal{H}_1^j$ is ramified in $K$ and $\mathcal{H}_1^{j^{3h}}=(x_j+y_j\sqrt{p})=(\alpha _j)$, then $(\alpha _j)$ is the square of the fractional ideal of $K$.
 		Note also that if $q_j\equiv 1\pmod 4$  for some $  1\leq j \leq m $, we have $x_j$ is odd and $y_j$ is even. So $\alpha_j = x_j+y_j\sqrt{p}\equiv x_j+y_j+\frac{-1+\sqrt{p}}{2}2y_j\pmod 4$  $\equiv x_j+y_j\pmod 4$  $\equiv \pm 1 \pmod 4$. Therefore, $\alpha_j^* \equiv  1 \pmod 4$ and the ideal generated by $\alpha_j^*$ is the square of the fractional ideal of $K$. Thus, by Proposition \ref{prop 2.7}, $K(\sqrt{\alpha _j^*})/K$ is also an unramified extension (the proof of the case $q_j\equiv 3\pmod 4$ is analogous). Therefore:
 		$$\mathbb{B}=\{q_1^*,q_2^*,...,q_n^*,\alpha_1^*,\alpha_2^*,\dots,\alpha_m^*\},$$
 		is a representative set of $\Delta /{K^*}^2$. Hence :
 		$$E=K(\sqrt{{q_1}^*},\sqrt{{q_2}^*},\dots,\sqrt{{q_n}^*},\sqrt{{\alpha_1}^*},\sqrt{{\alpha_2}^*},\dots,\sqrt{{\alpha_m}^*}),$$
 		is the Hilbert genus field of $K$.\\
 		
 		\item  Case: $a\equiv 3\pmod 4$:\\
 		In this case, we have $\delta_{K/k_0}= 4a\sqrt{p}$. So the   prime  ideals of $k_0$ which ramify in $K$ are the prime divisors of $a$ in $k_0$, the ideal $(\sqrt{p})$ and also the prime  ideal $(2)$. Thus, $r_2(\Delta/{K^*}^2)$ = $n+m+1$.\\
 		We consider the set:
 		$$\mathbb{B}=\{-1,q_1^*,q_2^*,\dots,q_n^*,\alpha_1^*,\alpha_2^*,\dots,\alpha_m^*\},$$
 		We proceed   as in the first case  and we show that the elements of   $\mathbb{B}$ are linearly independent modulo ${K^*}^2$. Noting that the genus fields of $K$ is:  $${K^{(*)}}=K(\sqrt{-1}, \sqrt{{q_1}^*},\sqrt{{q_2}^*},\dots,\sqrt{{q_n}^*}),$$ we deduce that the extensions $K(\sqrt{-1})/K$ and $K(\sqrt{q_i^*})/K$ (for $1\leq i\leq n$) are unramified, and in a similar way as in the    previous case,  we prove that the extensions  $K(\sqrt{\alpha _j^*})/K$ are also unramified. Thus, $\mathbb{B}$ is a representative set of $\Delta /{K^*}^2$. Therefore,
 		
 		$$E=K(\sqrt{-1},\sqrt{{q_1}^*},\sqrt{{q_2}^*},\dots,\sqrt{{q_n}^*},\sqrt{{\alpha_1}^*},\sqrt{{\alpha_2}^*},\dots,\sqrt{{\alpha_m}^*}).$$
 		\item  $a=2\displaystyle\prod_{i=1}^n q_i$ and $\displaystyle\prod_{i=1}^n q_i\equiv 1\pmod 4$.\\
 		In this case, we have $\delta_{K/k_0}= 4a\sqrt{p}$. So the   prime ideals of $k_0$ which ramify in $K$ are the prime divisors of $a$ in $k_0$, the ideal $(\sqrt{p})$ and the
 		prime ideal $(2)$. Then $r_2(\Delta/{K^*}^2)$ = $n+m+1$, and since the genus fields of $K$ is: $${K^{(*)}}=K(\sqrt{2}, \sqrt{{q_1}^*},\sqrt{{q_2}^*}\text{, . . .,}\sqrt{{q_n}^*}),$$ so we shall consider the set:
 		$$\mathbb{B}=\{2,q_1^*,q_2^*,\dots,q_n^*,\alpha_1^*,\alpha_2^*,...,\alpha_m^*\}.$$
 		Similarly to the previous cases, we show that $\mathbb{B}$ is  a representative set of $\Delta /{K^*}^2$, then:
 		$$E=K(\sqrt{2},\sqrt{{q_1}^*},\sqrt{{q_2}^*},\dots,\sqrt{{q_n}^*},\sqrt{{\alpha_1}^*},\sqrt{{\alpha_2}^*},\dots,\sqrt{{\alpha_m}^*}).$$
 	\end{enumerate}	
 	Using similar techniques as in the previous case, we prove the fourth item.
 	Which completes the proof.
 \end{proof}

 \begin{theorem}\label{second main theorem}
 	Let $p\equiv 1\pmod 8$ be a prime number and denote by $E$ the Hilbert genus field of  $K=\mathbb{Q}(\sqrt{-a{\varepsilon}_p \sqrt{p}})$. We have:
 	\begin{enumerate}[\indent\rm 1.]
 		\item If $a\equiv 3\pmod 4$, then:
 		$$E=K(\sqrt{{q_1}^*},\sqrt{{q_2}^*},\dots,\sqrt{{q_n}^*},\sqrt{{\alpha_1}^*},\sqrt{{\alpha_2}^*},\dots,\sqrt{{\alpha_m}^*});$$
 		\item If $a\equiv 1\pmod 4$, then:
 		$$E=K(\sqrt{-1},\sqrt{{q_1}^*},\sqrt{{q_2}^*},\dots,\sqrt{{q_n}^*},\sqrt{{\alpha_1}^*},\sqrt{{\alpha_2}^*},\dots,\sqrt{{\alpha_m}^*},\sqrt{\varepsilon_p});$$
 		\item If  $a=2\displaystyle\prod_{i=1}^n q_i$ and $\displaystyle\prod_{i=1}^n q_i\equiv 1\pmod 4$, then:
 		
 		$$ E=K(\sqrt{-2},\sqrt{{q_1}^*},\sqrt{{q_2}^*},\dots,\sqrt{{q_n}^*},\sqrt{{\alpha_1}^*},\sqrt{{\alpha_2}^*},\dots,\sqrt{{\alpha_m}^*},\sqrt{\varepsilon_p});$$
 		\item If  $a=2\displaystyle\prod_{i=1}^n q_i$ and $\displaystyle\prod_{i=1}^n q_i\equiv 3\pmod 4$, then:
 		$$ E=K(\sqrt{2},\sqrt{{q_1}^*},\sqrt{{q_2}^*},\dots,\sqrt{{q_n}^*},\sqrt{{\alpha_1}^*},\sqrt{{\alpha_2}^*},\dots,\sqrt{{\alpha_m}^*},\sqrt{\varepsilon_p}).$$
 	\end{enumerate}
 \end{theorem}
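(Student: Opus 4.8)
The plan is to follow exactly the scheme already executed in the proof of Theorem~\ref{first main theorem}, adapting it to the case $p\equiv 1\pmod 8$. As before, $K/k_0$ is a quadratic extension with $h(k_0)$ odd, so Lemma~\ref{lemma 4.1} applies and gives $r_2(\Delta/{K^*}^2)=t-e-1$, where $E=K(\sqrt{\Delta})$, $t$ is the number of primes of $k_0$ (finite or infinite) ramified in $K$, and $2^e=[E_{k_0}:E_{k_0}\cap N_{K/k_0}(K^*)]$. First I would compute $e$ via Hilbert symbols at the two infinite places $p_\infty,\bar p_\infty$ of $k_0$, exactly as in the previous proof. The key arithmetic difference is that for $p\equiv 1\pmod 8$ one has $\lambda=1$, so $\varepsilon_p=u+v\sqrt p$ with $u$ even and $v\equiv 1\pmod 4$ (Lemma after Lemma~\ref{lemma 4.1}); consequently $\varepsilon_p$ itself is $\equiv\square\pmod 4$ in $k_0$, and the local computations show that $-1$ and $-\varepsilon_p$ (resp. the relevant units) fail to be norms while $\varepsilon_p$ now does lie in $N_{K/k_0}(K^*)$ locally everywhere — this is precisely why $\sqrt{\varepsilon_p}$ appears in cases 2--4 but not in case~1. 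Thus $e=2$ throughout, and the change of parity of $\lambda$ shifts which of the four congruence cases for $a$ forces the extra generator $\sqrt{\varepsilon_p}$; I would record $e=2$ in each case and read off the discriminant $\delta_{K/k_0}$ from Lemma~\ref{lemma 2.1; discriminants} (with $b\equiv 0\pmod 2$ since $d=p$), counting $t$ as in the previous theorem.

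Next, for each of the four cases I would write down the candidate representative set $\mathbb{B}$ — the same list $\{q_j^*\}\cup\{\alpha_j^*\}$ together with whichever of $\{-1\},\{2\},\{-2\},\{\varepsilon_p\}$ is dictated by the genus field computed in the $p\equiv 1\pmod 8$ proposition of Section~\ref{sec:3} — and check that $\#\mathbb{B}$ equals $r_2(\Delta/{K^*}^2)=t-3$. Then I would verify multiplicative independence modulo ${K^*}^2$ by the same norm-to-$\mathbb{Q}$ argument: a nontrivial product $\beta=\prod {q_i^*}^{a_i}\prod{\alpha_j^*}^{b_j}(\varepsilon_p)^{c}$ lying in ${K^*}^2$ is either in ${k_0^*}^2$ or satisfies $\delta\beta\in{k_0^*}^2$ by Lemma~\ref{lemma 4.4}; applying $N_{k_0/\mathbb{Q}}$ and using $N(q_j^*)=q_j^2$, $N(\alpha_j)=x_j^2-py_j^2=q_j^{\lambda h}$ (now with $\lambda=1$, so $N(\alpha_j)=q_j z_j^2$ with $z_j=q_j^{(h-1)/2}$), $N(\sqrt p)=-p$ and $N(\varepsilon_p)=-1$, one gets a square in $\mathbb{Q}$ equal to $\pm p^{?}\prod q_j^{b_j}$ or $\pm 2^{?} p^{?}\prod q_j^{b_j}$, contradicting the distinctness of the primes $2,p,q_1,\dots,q_m$ unless all exponents vanish. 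When the generator $\sqrt{\varepsilon_p}$ is present, the case $b_j\equiv 0$ for all $j$ with some nonzero exponent reduces to showing $\varepsilon_p\cdot(\text{rational square up to sign})\notin{K^*}^2$, which follows since otherwise $K$ would contain a second quadratic subfield or $\pm\varepsilon_p$ or $\pm 2\varepsilon_p$ would be a square in $k_0$ — impossible as $\varepsilon_p$ is a fundamental unit.

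Finally I would check that each generator gives an unramified quadratic extension of $K$, so that $K(\sqrt{\mathbb{B}})\subseteq E$: the elements $q_i^*$ (and $-1$ or $\pm2$) already appear in $K^{(*)}$, hence are unramified by Section~\ref{sec:3}; the $\alpha_j^*$ are handled exactly as in Theorem~\ref{first main theorem} via Proposition~\ref{prop 2.7} — the ideal $(\alpha_j)=\mathcal{H}_1^{\lambda h}$ becomes a square of a fractional ideal of $K$ because $\mathcal{H}_1$ ramifies, and the congruence $\alpha_j^*\equiv\square\pmod 4$ is checked using the parities of $x_j,y_j$ from Lemma~\ref{lemma 4.3}; and for $\sqrt{\varepsilon_p}$ one uses Proposition~\ref{prop 2.7} together with $\varepsilon_p=u+v\sqrt p\equiv v^2\pmod 4$ (since $u$ is even, $v$ odd) and the fact that $(\varepsilon_p)=(1)$ is trivially a square, so $K(\sqrt{\varepsilon_p})/K$ is unramified at all finite primes — and unramified at infinity as well since $K$ is totally imaginary. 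Combining the count $\#\mathbb{B}=r_2(\Delta/{K^*}^2)$ with independence and the unramifiedness shows $\mathbb{B}$ is a representative set of $\Delta/{K^*}^2$, giving $E=K(\sqrt{\mathbb{B}})$ in each of the four cases. The main obstacle I anticipate is the $\sqrt{\varepsilon_p}$ bookkeeping: getting the local norm computation at the infinite places to correctly predict that $\varepsilon_p\in N_{K/k_0}(K^*)$ for $p\equiv1\pmod8$ (so $\varepsilon_p$ is a genuine new generator rather than being absorbed), and ensuring the independence argument handles the mixed exponent $c$ on $\varepsilon_p$ cleanly.
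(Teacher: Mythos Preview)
Your overall scheme matches the paper's, but there is a genuine confusion in the mechanism you propose for the appearance of $\sqrt{\varepsilon_p}$. You claim that for $p\equiv 1\pmod 8$ the unit $\varepsilon_p$ ``now does lie in $N_{K/k_0}(K^*)$ locally everywhere'', and that this is why $\sqrt{\varepsilon_p}$ shows up. This is wrong, and in fact contradicts your own assertion that $e=2$ throughout: if $\varepsilon_p$ were a norm then $E_{k_0}\cap N_{K/k_0}(K^*)$ would contain $\langle\varepsilon_p\rangle$, forcing $e\le 1$. The Hilbert-symbol computations at the two real places $p_\infty,\bar p_\infty$ are pure sign computations and are \emph{identical} to those in the proof of Theorem~\ref{first main theorem}; they do not depend on $p\pmod 8$ at all. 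So $-1,\varepsilon_p,-\varepsilon_p$ are all non-norms and $e=2$ exactly as before.

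The actual source of the extra generator is on the $t$ side, not the $e$ side, and it is a fact you never mention: when $p\equiv 1\pmod 8$ the prime $2$ \emph{splits} in $k_0=\mathbb{Q}(\sqrt p)$. Hence in the cases where $\delta_{K/k_0}$ carries a factor of $4$ (cases 2, 3, 4 here --- the roles of $a\equiv 1$ and $a\equiv 3\pmod 4$ having swapped relative to Theorem~\ref{first main theorem} via the $a+b\pmod 4$ clause of Lemma~\ref{lemma 2.1; discriminants}), there are \emph{two} primes $2_1,2_2$ of $k_0$ ramifying in $K$ rather than one. This raises $t$ by one compared with the analogous $p\equiv 5\pmod 8$ situation, hence $r_2(\Delta/{K^*}^2)$ by one, which is precisely the room for $\sqrt{\varepsilon_p}$. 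In case~1 ($a\equiv 3\pmod 4$) the relative discriminant is $a\sqrt p$, $2$ is unramified, and no extra generator is needed. If you ``count $t$ as in the previous theorem'' without noticing the splitting of $2$, your cardinality check $\#\mathbb B=t-3$ will fail in cases 2--4. For the unramifiedness of $K(\sqrt{\varepsilon_p})/K$ the paper simply cites \cite{cohn}; your Proposition~\ref{prop 2.7} approach can be made to work but requires a careful congruence in $\mathcal O_{k_0}=\mathbb{Z}\bigl[\tfrac{1+\sqrt p}{2}\bigr]$, not just the rough ``$\varepsilon_p\equiv v^2\pmod 4$'' you state. The independence argument you outline, including the exponent on $\varepsilon_p$, is essentially what the paper does and is fine.
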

 \begin{proof}
 	The proof of the case  $a\equiv 3\pmod 4$  is similar to that of the first item of Theorem \ref{first main theorem}, so we let it for the reader. 
 	
 	In the case where $a\equiv 1\pmod 4$, we have $\delta_{K/k_0}= 4a\sqrt{p}$, then the   prime ideals of $k_0$ that ramify in $K$ are the prime divisors of $a$ in $k_0$, 
 	the ideal $(\sqrt{p})$, $2_1$ and $2_2$,    where $2_1$ and $2_2$ are the two prime ideals of $k_0$ above $2$. Thus, $r_2(\Delta/{K^*}^2)$ = $n+m+2$. On the other hand, the genus field of $K$ in this case is  $K^{(*)}=K\left(\sqrt{-1}, \sqrt{{q_1}^*},\sqrt{{q_2}^*},\dots,\sqrt{{q_n}^*}\right)$. As in the proof of   
 	Theorem \ref{first main theorem}, we shall consider the set:  
 	$$\mathbb{B}=\{-1,q_1^*,q_2^*,\dots,q_n^*,\alpha_1^*,\alpha_2^*,\dots,\alpha_m^*,\varepsilon_p \},$$
 	where $\varepsilon_p$ is the fundamental unit of $k_0$ and we shall   show that the elements of $\mathbb{B}$ are linearly independent modulo ${K^*}^2$. Let  $\beta =(-1)^a{\varepsilon_p}^b\left(\displaystyle\prod_{i=1}^n {q_i^*}^{a_i}\right)\left(\displaystyle\prod_{j=1}^m {\alpha_j^*}^{b_j}\right),$ where $a, b, a_i, b_j \in\{0,1\}$ and they are not all zero.
 	Assume that $\beta \in {K^*}^2$.\\
 	$\bullet$ If $b=0$, then $\beta =(-1)^a\left(\displaystyle\prod_{i=1}^n {q_i^*}^{a_i}\right)\left(\displaystyle\prod_{j=1}^m {\alpha_j^*}^{b_j}\right)$, following the same reasoning in the proof of  Theorem \ref{first main theorem}  we get a contradiction.\\

 	\noindent$\bullet$ If $b=1$ we will distinguish two cases:
 	\begin{enumerate}[\indent\rm 1.]
 		\item Assume that $\forall j\in \{1,2,\dots,m\}, b_j=0$, $\beta =(-1)^a\varepsilon_p\left(\displaystyle\prod_{i=1}^n {q_i^*}^{a_i}\right)$, then by Lemma \ref{lemma 4.4}, $\beta \in {k_{0}^*}^2$ or $\delta\beta\in {k_{0}^*}^2 $. If $\beta \in {k_{0}^*}^2$,  then  $N(\beta)=-\left(\displaystyle\prod_{i=1}^n {q_i^*}^{a_i}\right)^2\in {\mathbb{Q}}^2$ (in fact, $N(\varepsilon_p)=-1$). Thus $-1$ is a square in $\mathbb{Q}$, which is impossible. If $\delta\beta\in {k_{0}^*}^2$, $N(\delta\beta)=-pa^2\left(\displaystyle\prod_{i=1}^n {q_i^*}^{a_i}\right)^2\in {\mathbb{Q}}^2$, then $-p$ is a square in $\mathbb{Q}$ and which is also   impossible. 
 		\item Assume that $\exists j\in\{1,2,\dots,m\}, b_j\neq 0$, then $\beta =(-1)^a{\varepsilon_p}\left(\displaystyle\prod_{i=1}^n {q_i^*}^{a_i}\right)\left(\displaystyle\prod_{j=1}^m {\alpha_j^*}^{b_j}\right)$.
 		If  $\beta \in {k_{0}^*}^2$, then by keeping the same previous notations we obtain :
 		$$N(\beta)=-(-p)^l\left(\displaystyle\prod_{i=1}^n {q_i}^{a_i}\right)^2\left(\displaystyle\prod_{j=1}^m {q_j}^{b_j}\right)\left(\displaystyle\prod_{j=1}^m {z_j}^{b_j}\right)^2,$$
 		Thus:
 		$$N(\beta )=\begin{cases} -\left(\displaystyle\prod_{j=1}^m {q_j}^{b_j}\right)Z^2 \in {{\mathbb{Q}}^*}^2,
 			\text{ if } l \text{ is even,} \\
 			-p\left(\displaystyle\prod_{j=1}^m {q_j}^{b_j}\right)Z^2 \in {{\mathbb{Q}}^*}^2, \text{ if } l \text{ is odd,}\end{cases}$$  
 		where $Z\in \mathbb{Q}$, so $-\left(\displaystyle\prod_{j=1}^m {q_j}^{b_j}\right) \in {{\mathbb{Q}}^*}^2$ or $-p\left(\displaystyle\prod_{j=1}^m {q_j}^{b_j}\right) \in {{\mathbb{Q}}^*}^2$, and this is impossible. 
 		
 		\noindent If  now $\delta\beta \in {k_{0}^*}^2$, we get:
 		$$
 		N(\delta\beta )=\begin{cases} p\left(\displaystyle\prod_{j=1}^m {q_j}^{b_j}\right)Y^2 \in {{\mathbb{Q}}^*}^2,
 			\text{ if } l \text{ is even,} \\
 			-\left(\displaystyle\prod_{j=1}^m {q_j}^{b_j}\right)Y^2 \in {{\mathbb{Q}}^*}^2,
 			\text{ if } l \text{ is odd,}\end{cases}
 		$$  
 		where $Y\in \mathbb{Q}$, so $-\left(\displaystyle\prod_{j=1}^m {q_j}^{b_j}\right) \in {{\mathbb{Q}}^*}^2$ or $p\left(\displaystyle\prod_{j=1}^m {q_j}^{b_j}\right) \in {{\mathbb{Q}}^*}^2$, which is also impossible. Therefore, the elements of $\mathbb{B}$ are linearly independent modulo ${K^*}^2$. It is easy seen by \cite[p. 67]{cohn}  that the extension  $K(\sqrt{\varepsilon_p})/K$ is    unramified.
 		Thus, by the above discussion,     $\mathbb{B}$ is  a representative set of $\Delta /{K^*}^2$ and the Hilbert genus field of $K$ is:
 		$$E=K\left(\sqrt{-1},\sqrt{{q_1}^*},\sqrt{{q_2}^*},\dots,\sqrt{{q_n}^*},\sqrt{{\alpha_1}^*},\sqrt{{\alpha_2}^*},\dots,\sqrt{{\alpha_m}^*},\sqrt{\varepsilon_p}\right).$$
 	\end{enumerate}
 	We similarly prove the rest.	
 \end{proof}

 Now we shall construct the Hilbert genus field of $ K=\mathbb{Q}(\sqrt{-a{\varepsilon}_p \sqrt{p}})$, with $p=2$.
 Let us now assume that   $a=\displaystyle\prod_{i=1}^n q_i$ is an odd positive square-free integer such that 	$q_j\equiv \pm 1\pmod 8$ for all  $1\leq j \leq m$ $( m\leq n$, the case $m=0$ is possible here$)$. 
 
 \begin{theorem}\label{third main theorem}
 	Let $a$ be as above.  For any $j$ such that   $1\leq j \leq m$, let   $x_j$ and $y_j$ be the  two   positive integers such that $q_j=x_j^2-2y_j^2$.  Put $\alpha_j = x_j+y_j\sqrt{2} $, then  
 	the Hilbert genus field of $ K=\mathbb{Q}(\sqrt{-a{\varepsilon}_2 \sqrt{2}})$ is: 
 	$$E=K\left(\sqrt{{q_1}^*}, \sqrt{{q_2}^*}, \dots,\sqrt{{q_n}^*}, \sqrt{{\alpha_1}^*}, \sqrt{{\alpha_2}^*},\dots,\sqrt{{\alpha_m}^*}\right),$$
 	where  $\alpha_j^*$ is defined as follows:
 	
 	\begin{enumerate}[\rm $\bullet$]
 		\item If $q_j\equiv 1\pmod 8, \alpha_j^*:=(-1)^{\frac{x_j-1}{2}}\alpha_j,$  
 		\item	 If $q_j\equiv -1\pmod 8,$ then $x_j$ and $y_j$ are odd and we put  
 		$$\alpha_j^*:= \left\{\begin{array}{cll}
 			-\varepsilon_2\alpha_j,&	\text{ if }    x_j \equiv \pm1\pmod 4 \text{ and }    y_j\equiv -1\pmod 4, \\
 			\varepsilon_2\alpha_j,&\text{ else. }  
 			
 		\end{array}\right. $$
 	\end{enumerate}
 \end{theorem}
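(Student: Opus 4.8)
The plan is to follow the scheme of the proofs of Theorems \ref{first main theorem} and \ref{second main theorem}, with $k_0=\mathbb{Q}(\sqrt2)$ as base field; the new features are that $h(k_0)=1$ and that $2$ ramifies in $k_0$. Since $K/k_0$ is quadratic and the class number of $k_0$ is odd, Lemma \ref{lemma 4.1} applies: writing $E=K(\sqrt\Delta)$ with ${K^*}^2\subset\Delta\subset K^*$, one has $r_2(\Delta/{K^*}^2)=t-e-1$, so it remains to compute $t$ and $e$. For $e$: since $N_{k_0/\mathbb{Q}}(\varepsilon_2)=-1$ and $K$ is totally imaginary, evaluating the Hilbert symbols $\left(\frac{-1,\,\delta}{v}\right)$, $\left(\frac{\varepsilon_2,\,\delta}{v}\right)$, $\left(\frac{-\varepsilon_2,\,\delta}{v}\right)$ at the two archimedean places $v$ of $k_0$ exactly as in Theorem \ref{first main theorem} shows that each of $-1,\varepsilon_2,-\varepsilon_2$ fails to be a local norm at one of those places; by the Hasse norm theorem none of them lies in $N_{K/k_0}(K^*)$, and since $E_{k_0}=\langle-1,\varepsilon_2\rangle$ this gives $e=2$. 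For $t$: Lemma \ref{lemma 2.1; discriminants} (with $d=p=2$) gives $\delta_{K/k_0}=4a\sqrt2$, whose ideal is $(\sqrt2)^5$ times the product of the prime divisors of $a$ in $k_0$; hence the finite ramified primes are $(\sqrt2)$ together with the primes above $a$ — namely $2m$ primes above the $q_j$ with $j\le m$ (which split, as $\left(\frac2{q_j}\right)=1$) and $n-m$ above the inert $q_j$ — that is, $n+m+1$ finite primes, to which we add the two ramified archimedean primes. So $t=n+m+3$ and $r_2(\Delta/{K^*}^2)=n+m$.

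It therefore suffices to exhibit $n+m$ elements of $K^*$ generating $\Delta/{K^*}^2$, and the natural candidate is
$$\mathbb{B}=\{q_1^*,q_2^*,\dots,q_n^*,\alpha_1^*,\alpha_2^*,\dots,\alpha_m^*\},$$
which has cardinality $n+m$. I would first show its elements are multiplicatively independent modulo ${K^*}^2$. Let $\beta=\bigl(\prod_i q_i^{*a_i}\bigr)\bigl(\prod_j\alpha_j^{*b_j}\bigr)$ with $a_i,b_j\in\{0,1\}$ not all zero, and assume $\beta\in{K^*}^2$. If all $b_j=0$ then $\beta\in\mathbb{Q}^*\cap{K^*}^2$, so $\mathbb{Q}(\sqrt\beta)$ is a quadratic subfield of $K$; but $\beta$ is $\pm$ a nontrivial squarefree product of distinct odd primes, hence neither a rational square nor twice one, contradicting that the only quadratic subfield of $K$ is $k_0=\mathbb{Q}(\sqrt2)$. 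If some $b_j\neq0$, then $\beta\in k_0^*$, so by Lemma \ref{lemma 4.4} either $\beta\in{k_0^*}^2$ or $\delta\beta\in{k_0^*}^2$; applying $N=N_{k_0/\mathbb{Q}}$ and using $N(q_i^*)=q_i^2$, $N(\alpha_j)=x_j^2-2y_j^2=q_j$, $N(\alpha_j^*)=\pm q_j$ (the sign being $-$ precisely when $q_j\equiv-1\pmod8$, since $N(\varepsilon_2)=-1$), and $N(\delta)=2a^2$, one finds that $N(\beta)$ is a rational square times $\pm\prod_j q_j^{b_j}$, and $N(\delta\beta)$ a rational square times $\pm2\prod_j q_j^{b_j}$; since $p,q_1,\dots,q_m$ are distinct primes, neither can be a square in $\mathbb{Q}^*$. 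Hence $\mathbb{B}$ is independent modulo ${K^*}^2$.

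Next I would check that each $K(\sqrt{q_i^*})/K$ and each $K(\sqrt{\alpha_j^*})/K$ is unramified. For the $q_i^*$ this is immediate: by the proposition giving the genus field in the case $p=2$, $K^{(*)}=K(\sqrt{q_1^*},\dots,\sqrt{q_n^*})$, which is unramified over $K$. For $\alpha_j^*$ I would apply Proposition \ref{prop 2.7} with $k'=K$ and $\mu=\alpha_j^*$, which is coprime to $2$ because $N_{k_0/\mathbb{Q}}(\alpha_j^*)=\pm q_j$ is odd. Condition (1) holds: as $h(k_0)=1$ and $q_j$ splits, $q_j\mathcal{O}_{k_0}=\mathcal{H}_1\mathcal{H}_2$ with $\mathcal{H}_1=(\alpha_j)$, and $\mathcal{H}_1$ ramifies in $K$ since $q_j\mid a$ divides $\delta_{K/k_0}$; thus $(\alpha_j^*)\mathcal{O}_K=(\alpha_j)\mathcal{O}_K=\mathcal{H}_1\mathcal{O}_K$ is the square of a fractional ideal of $K$. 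Condition (2) — that $\alpha_j^*\equiv\xi^2\pmod4$ for some $\xi$ — is exactly what the case-dependent definition of $\alpha_j^*$ is tailored to ensure: for $q_j\equiv1\pmod8$ one has $x_j$ odd and $y_j$ even, and choosing the generator of $\mathcal{H}_1$ so that moreover $y_j\equiv0\pmod4$ yields $\alpha_j^*\equiv1\pmod4$; for $q_j\equiv-1\pmod8$, $x_j$ and $y_j$ are both odd, $\varepsilon_2\alpha_j=(x_j+2y_j)+(x_j+y_j)\sqrt2$, and a check of the four residue classes of $(x_j,y_j)$ modulo $4$ shows that the sign prescribed in the statement makes $\alpha_j^*$ congruent modulo $4$ to $1$ or to $3+2\sqrt2=\varepsilon_2^2$. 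In all cases $\alpha_j^*\equiv\xi^2\pmod4$ with $\xi\in\{1,\varepsilon_2\}$, so by Proposition \ref{prop 2.7} the extension $K(\sqrt{\alpha_j^*})/K$ is unramified at all finite primes, and — $K$ being totally imaginary — unramified.

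Combining these, $K(\sqrt{\mathbb{B}})/K$ is an unramified elementary abelian $2$-extension of degree $2^{\,n+m}$, which by the rank computation equals $[E:K]$; hence $K(\sqrt{\mathbb{B}})=E$ and $\mathbb{B}$ is a complete set of representatives of $\Delta/{K^*}^2$, giving the stated description of $E$. The step I expect to be the main obstacle is condition (2) of Proposition \ref{prop 2.7}, i.e.\ the congruence $\alpha_j^*\equiv\square\pmod4$: this is precisely the reason for the intricate, case-dependent sign-and-unit correction in the definition of $\alpha_j^*$, and verifying it requires a careful accounting of $x_j$ and $y_j$ modulo $4$ — together with, when $q_j\equiv1\pmod8$, a judicious choice of the generator $\alpha_j=x_j+y_j\sqrt2$ of the prime above $q_j$. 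The remaining steps run in close parallel to Theorems \ref{first main theorem} and \ref{second main theorem}.
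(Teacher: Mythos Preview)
Your proposal is correct and follows essentially the same route as the paper's proof: compute $e=2$ via Hilbert symbols at the archimedean places, read off the ramified primes from $\delta_{K/k_0}=4a\sqrt{2}$ to get $r_2(\Delta/{K^*}^2)=n+m$, verify independence of $\mathbb{B}$ by the norm argument, and check unramifiedness of each $K(\sqrt{\alpha_j^*})/K$ via Proposition~\ref{prop 2.7} with a case-by-case analysis of $\alpha_j^*\bmod 4$. The paper is terser (it defers the $e$-computation and the independence argument to the earlier theorems and only writes out one of the four $(x_j,y_j)\bmod 4$ cases when $q_j\equiv -1\pmod 8$), but the substance is the same; in particular your identification of the congruence $\alpha_j^*\equiv\xi^2\pmod 4$ as the crux, and the need (when $q_j\equiv 1\pmod 8$) to choose the representation with $y_j\equiv 0\pmod 4$, matches the paper exactly.
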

 \begin{proof}
 	It is known that for a prime number $q\equiv \pm 1\pmod 8$, there are positive integers $x, y$ such that  $q=x^2-2y^2$, and  if $q\equiv 1\pmod 8$, the integers $x$ and $y$ may be chosen such that  $x\equiv 1\pmod 2$ and $y\equiv 0\pmod 4$ (cf. \cite{leowillia}). 
 	
 	\noindent$\bullet$ Assume that $q_j\equiv 1\pmod 8$, for some  $1\leq j \leq m$. We have $(\alpha_j)=(x_j+y_j\sqrt{2})$ and the prime ideals of $k_0$ above $(q_j)$, are ramified in $K$. 
 	Then the ideal $(\alpha_j)$ is the square of a fractional ideal of $K$. By the definition of $\alpha_j^*$, we find $\alpha_j^*\equiv 1\pmod 4$ and the ideal ($\alpha_j^*$) is the square of   a fractional ideal of $K$ for $1\leq j\leq m$. So by Proposition \ref{prop 2.7}, $K(\sqrt{\alpha_j^*})/K$ is an unramified extension.
 	
 	\noindent$\bullet$ Assume that $q_j\equiv -1\pmod 8$, for some  $1\leq j \leq m$.	Let us firstly show that $y_j$ is odd. It is clear that $x_j$ is odd. We have,
 	$2y_j^2\equiv x^2-q  \equiv2\pmod 8$. By easy calculus one can check that the classes $\overline{y}$ of  $\mathbb{Z}/8\mathbb{Z}$ such that $2\overline{y}^2=\overline{2}$ are exactly the classes of odd integers $y$. Thus $y_j$ is odd. If $x_j\equiv y_j\equiv -1\pmod 4$, then we have
 	$ \alpha_j^*=- (1+\sqrt{2})(x_j+y_j\sqrt{2})= -(x_j+2y_j)-(x_j+y_j)\sqrt{2}$. Thus, 
 	$\alpha_j^* \equiv -1-2\sqrt{2}\equiv(3-2\sqrt{2})\equiv(1-\sqrt{2})^2 \pmod 4$. Since $q_j$ ramify in $K/k_0$, $(\alpha_j)$ is the square of an ideal of $K$. Therefore, by Proposition \ref{prop 2.7}, $K(\sqrt{\alpha_j^*})/K$ is an unramified extension. We similarly proceed for the other cases of $x_j$ and $y_j$.

 	Note that, by Lemma \ref{lemma 2.1; discriminants},  $\delta_{K/k_0}= 4a\sqrt{2}$. So  the   prime ideals of $k_0$ which ramify in $K$ are the prime divisors of $a$ in $k_0$ and the prime ideal $(\sqrt{2})$.
 	Therefore,  $r_2(\Delta/{K^*}^2)=n+m$. Hence, we show as in the proofs of the previous theorems that the set: $$\mathbb{B}=\{q_1^*, q_2^*, \dots, q_n^*, \alpha_1^*, \alpha_2^*, \dots, \alpha_m^*\}$$ is a representation of $\Delta /{K^*}^2$. Finally, we get:
 	$$E=K(\sqrt{{q_1}^*}, \sqrt{{q_2}^*}, \dots,\sqrt{{q_n}^*}, \sqrt{{\alpha_1}^*}, \sqrt{{\alpha_2}^*}, \dots, \sqrt{{\alpha_m}^*}),$$
 	which completes the proof.
 \end{proof}

 We close this section with some numerical examples.
 
 \section*{{Examples}}
 \begin{enumerate}[\indent\rm 1.]
 	\item Theorem \ref{first main theorem}: (the case  $a\equiv 3\pmod 4$). Let $K=\mathbb{Q}(\sqrt{-42427(5+2\sqrt{5})}\,)=\mathbb{Q}(\sqrt{-42427(2+\sqrt{5})\sqrt{5}}\,)$. We have $p=5\equiv 5 \pmod 8$, $\varepsilon_5=2+\sqrt{5}$ is the fundamental unit of $\mathbb{Q}(\sqrt{5})$, $42427=7\times 11\times 19\times 29\equiv 3\pmod 4$ and $(\frac{5}{7})=-1$, $(\frac{5}{11})$ = $(\frac{5}{19})$ = $(\frac{5}{29})$ = $1$, then $n=4$ and $m=3$. Thus,  $r_2(\Delta/{K^*}^2)$ = $n+m+1$ = 8. As $11=4^2-5$, $19=8^2-5\times 3^2$ and $29=7^2-5\times 2^2$, it follows that  $ \alpha_1=4+\sqrt{5}, \alpha_2=8+3\sqrt{5}$ and $\alpha_3=7+2\sqrt{5}$,
 	so $\alpha_1^*=5+4\sqrt{5}$, $\alpha_2^*=-(15+8\sqrt{5})$ and $\alpha_3^*=7+2\sqrt{5}$.\\
 	Hence:
 	$$E= K(\sqrt{-3}, \sqrt{-7}, \sqrt{-11}, \sqrt{-19}, \sqrt{29}, \sqrt{\alpha_1^*}, \sqrt{\alpha_2^*}, \sqrt{\alpha_3^*})$$
 	is the Hilbert genus fields of $K$.\\
 	\item Theorem \ref{second main theorem}: (the case $a\equiv 1\pmod 4$). Let $K=\mathbb{Q}(\sqrt{-4199\varepsilon_{73}\sqrt{73}}\,)$, we have: $73\equiv 1\pmod 8$, $4199=13\times 17\times 19\equiv 1 \pmod 4$,  $(\frac{73}{13})$ = $(\frac{73}{17})$ =$-1$ and $(\frac{73}{19})$ = 1. Thus, $n=3,  m=1$.  Therefore, $r_2(\Delta/{K^*}^2)$ = $n+m+2$=6.\\
 	As $19=26^2+3\sqrt{73}$, then $\alpha_1=26+3\sqrt{73}$ and $ \alpha_1^*=219+26\sqrt{73}$. Hence,   the Hilbert genus fields of $K$ is:
 	$$E= K(\sqrt{-1}, \sqrt{13}, \sqrt{-17}, \sqrt{-19}, \sqrt{\varepsilon_{73}}, \sqrt{219+26\sqrt{73}}),$$
 	where $\varepsilon_{73}$ is the fundamental unit of $\mathbb{Q}(\sqrt{73})$.
 	\item Theorem \ref{third main theorem}: (the case $p=2$). Let $K=\mathbb{Q}(\sqrt{595(1+\sqrt{2})\sqrt{2}}\,)$. We have $595= 7\times 17\times 5$, $(\frac{2}{5})=-1$ and $(\frac{2}{7}) = (\frac{2}{17}) =  1$. Then $r_2(\Delta/{K^*}^2) = n+m = 3+2 = 5$, by $7\equiv 7\pmod 8$, $17\equiv 1\pmod 8$, $7=3^2-2\times 1^2$ and $17 = 7^2-2\times 4^2$. So $\alpha_1 = 3+1\sqrt{2}, \alpha_2 = 7+4\sqrt{2}$,   $\alpha_1^* = (1+\sqrt{2})(3+\sqrt{2})=5+4\sqrt{2}$ and $\alpha_2^* = -(7+4\sqrt{2})$. Hence, the Hilbert genus field of $K$ is:
 	$$E= K(\sqrt{5}, \sqrt{-7}, \sqrt{17}, \sqrt{5+4\sqrt{2}}, \sqrt{-(7+4\sqrt{2})}).$$.
 \end{enumerate}

 \section*{Acknowledgment}
 The authors are so grateful to Professor Mohammed Taous for  reading the preliminary versions of this paper as well for many helpful suggestions and discussions.

\end{document}